\theoremstyle{plain}
\newtheorem{thm}{Theorem}[section]
\newtheorem{lem}[thm]{Lemma}
\newtheorem{cor}[thm]{Corollary}
\newtheorem{prop}[thm]{Proposition}
\newtheorem{ass}{Assumption}
\theoremstyle{definition}
\theoremstyle{remark}
\newtheorem{rem}[thm]{Remark}
\newtheorem*{remark*}{Remark}
\numberwithin{equation}{section}
\newcommand{\cD}{{\mathcal{D}}}
\newcommand{\cF}{{\mathcal{F}}}
\newcommand{\cH}{{\mathcal{H}}}
\newcommand{\cL}{{\mathcal{L}}}
\newcommand{\cP}{{\mathcal{P}}}
\newcommand{\cS}{{\mathcal{S}}}
\newcommand{\cT}{{\mathcal{T}}}
        \newcommand{\field}[1]{{\mathbb{#1}}}
        \newcommand{\NN}{\field{N}}
        \newcommand{\RR}{\field{R}}
        \newcommand{\CC}{\field{C}}
\newcommand{\tr}{\mbox{\rm tr}}
\begin{document}

\title[Eigenvalue asymptotics for the Bochner Laplacian]{Semiclassical eigenvalue  asymptotics  for the Bochner Laplacian of a positive line bundle on a symplectic manifold}

\author[Y. A. Kordyukov]{Yuri A. Kordyukov}
\address{Institute of Mathematics\\
         Russian Academy of Sciences\\
         112~Chernyshevsky str.\\ 450008 Ufa\\ Russia} \email{yurikor@matem.anrb.ru}

\thanks{Supported by the Russian Science Foundation,
project no. 17-11-01004.}

\subjclass[2000]{Primary 58J50; Secondary 53D50, 58J37}

\keywords{Bochner Laplacian, symplectic manifolds, eigenvalue asymptotics, semiclassical analysis,  magnetic Schr\"odinger operatpr}

\begin{abstract}
We consider the Bochner Laplacian on high tensor powers of a positive line bundle on a closed symplectic manifold (or, equivalently, the semiclassical magnetic Schr\"odinger operator with the non-degenerate magnetic field). We assume that the operator has discrete wells. The main result of the paper states asymptotic expansions for its low-lying eigenvalues. 
\end{abstract}

\date{}

 \maketitle

\tableofcontents

\section{Introduction}
Let $(X,g)$ be a compact Riemannian manifold of dimension $d$ without boundary and let $(L,h^L)$ be a Hermitian line bundle on $X$ with a Hermitian connection $\nabla^L$. For any $p\in \NN$, let $L^p:=L^{\otimes p}$ be the $p$th tensor power of $L$ and let
\[
\nabla^{L^p}: {C}^\infty(X,L^p)\to
{C}^\infty(X, T^*X \otimes L^p)
\] 
be the Hermitian connection on $L^p$ induced by $\nabla^{L}$. Consider the induced Bochner Laplacian $\Delta^{L^p}$ acting on $C^\infty(X,L^p)$ by
\[
\Delta^{L^p}=\big(\nabla^{L^p}\big)^{\!*}\,
\nabla^{L^p},
\]
where $\big(\nabla^{L^p}\big)^{\!*}: {C}^\infty(X,T^*X\otimes L^p)\to
{C}^\infty(X,L^p)$ is the formal adjoint of  $\nabla^{L^p}$.  If $\{e_j\}_{j=1,\ldots,d}$ is a local orthonormal frame of $TX$, then $\Delta^{L^p}$ is given by
\[
\Delta^{L^p}=-\sum_{j=1}^{d}\left[(\nabla^{L^p}_{e_j})^2-\nabla^{L^p}_{\nabla^{TX}_{e_j}e_j} \right],
\]
where $\nabla^{TX}$ is the Levi-Civita connection of the metric $g$. 

We are interested in the asymptotic behavior of the low-lying eigenvalues of the operator $\Delta^{L^p}$ as $p\to\infty$ (in the semiclassical limit). This problem is closely related with a similar problem for the semiclassical magnetic Schr\"odinger operator and was studied by many authors. For discussions about the subject, the reader is referred to the books and reviews \cite{FH10,HK-luminy,HK14,Raymond:book} and recent papers \cite{toeplitz,ma-savale,morin} (and the bibliography therein).

In this paper, we study the problem in a particular setting. Consider the real-valued closed 2-form $\omega$ given by 
\begin{equation}\label{e:def-omega}
\omega=\frac{i}{2\pi}R^L, 
\end{equation} 
where $R^L$ is the curvature of the connection $\nabla^L $ defined as $R^L=(\nabla^L)^2$. We assume that the 2-form $\omega$ is non-degenerate. Thus, $X$ is a symplectic manifold. In particular, its dimension is even, $d=2n$, $n\in \NN$. 

 Let $J_0 : TX\to TX$ be a skew-adjoint operator such that 
\[
\omega(u,v)=g(J_0u,v), \quad u,v\in TX. 
\]
Consider the function $\tau$ on $X$ given by
 \[
 \tau(x)=\pi \operatorname{Tr}[(-J^2_0(x))^{1/2}],\quad x\in X.
 \]
In general, $\tau$ is only continuous, but, if $\omega$ is of constant rank, then it is smooth. 
Put 
\[
\tau_0=\inf_{x\in X}\tau(x)>0.
\]
Our main assumption is the following assumption of discrete wells:

\begin{ass}\label{a:2}
Each minimum $x_0\in X$ of $\tau$, $\tau(x_0)=\tau_0$, is non-degenerate: 
\[
{\rm Hess}\,\tau(x_0)>0.
\]
\end{ass}

With each non-degenerate minimum $x_0$ of $\tau$, one can associate the model operator $\mathcal D_{x_0}$ in $L^2(T_{x_0}X)$ as follows.

Consider a second order differential operator $\mathcal L_{x_0}$ in $C^\infty(T_{x_0}X)$ given by
\begin{equation}\label{e:defL0}
\mathcal L_{x_0}=-\sum_{j=1}^{2n} \left(\nabla_{e_j}+\frac 12 R^L_{x_0}(Z,e_j)\right)^2-\tau (x_0), 
\end{equation}
where $\{e_j\}_{j=1,\ldots,2n}$ is an orthonormal base in $T_{x_0}X$ with the corresponding  coordinates on $T_{x_0}X\cong \mathbb R^{2n}$, $Z\in \mathbb R^{2n} \mapsto \sum_{j=1}^{2n}Z_je_j\in T_{x_0}X$. Here, for $U\in T_{x_0}X$, we denote by $\nabla_U$ the ordinary operator of differentiation in the direction $U$ on $C^\infty(T_{x_0}X)$. The operator $\mathcal L_{x_0}$ is well-defined, that is, it is independent of the choice of $\{e_j\}_{j=1,\ldots,2n}$. 

Let $\mathcal P=\mathcal P_{x_0}$ be the orthogonal projection in $L^2(T_{x_0}X)$ to the kernel $N=\ker \mathcal L_{x_0}$ of the operator $\mathcal L_{x_0}$. Its smooth Schwartz kernel $\mathcal P_{x_0}(Z, Z^\prime)$ with respect to the Riemannian volume form $dv_{TX}$ on $T_{x_0}X$ is called the Bergman kernel of $\mathcal L_{x_0}$. It can be described explicitly (see, for instance, \cite[Section 1.4]{ma-ma08} for more details). 

Consider the operator $J : TX\to TX$ given by 
\begin{equation}\label{e:defJ}
J=J_0(-J^2_0)^{-1/2}.
\end{equation} 
Then $J$ is an almost complex structure compatible with $\omega$ and $g$, that is, $g(Ju,Jv)=g(u,v), \omega(Ju,Jv)=\omega(u,v)$ and $\omega(u,Ju)\geq 0$ for any $u,v\in TX$.  
The almost complex structure $J_{x_0}: T_{x_0}X\to T_{x_0}X$ induces a splitting $T_{x_0}X\otimes_{\mathbb R}\mathbb C=T^{(1,0)}_{x_0}X\oplus T^{(0,1)}_{x_0}X$, where $T^{(1,0)}_{x_0}X$ and $T^{(0,1)}_{x_0}X$ are the eigenspaces of $J_{x_0}$ corresponding to eigenvalues $i$ and $-i$ respectively. Denote by $\det_{\mathbb C}$ the determinant function of the complex space $T^{(1,0)}_{x_0}X$. Put
\[
\mathcal J_{x_0}=-2\pi i J_0.
\]
Then $\mathcal J_{x_0} : T^{(1,0)}_{x_0}X\to T^{(1,0)}_{x_0}X$ is positive, and $\mathcal J_{x_0} : T_{x_0}X\to T_{x_0}X$ is skew-adjoint. 
The Bergman kernel $\mathcal P_{x_0}(Z, Z^\prime)$ is given by
\begin{multline} \label{e:defP0}
\mathcal P_{x_0}(Z, Z^\prime)\\ =\frac{\det_{\mathbb C}\mathcal J_{x_0}}{(2\pi)^n}\exp\left(-\frac 14\langle (\mathcal J^2_{x_0})^{1/2}(Z-Z^\prime), (Z-Z^\prime)\rangle +\frac 12 \langle \mathcal J_{x_0} Z, Z^\prime \rangle \right).
\end{multline}

Choose an orthonormal basis $\{w_j : j=1,\ldots,n\}$ of $T^{(1,0)}_{x_0}X$, consisting of eigenvectors of $\mathcal J_{x_0}$:
\[
\mathcal J_{x_0}w_j=a_jw_j,  \quad j=1,\ldots,n,
\]
with some $a_j>0$.
Then $e_{2j-1}=\frac{1}{\sqrt{2}}(w_j+\bar w_j)$ and $e_{2j}=\frac{i}{\sqrt{2}}(w_j-\bar w_j)$, $j=1,\ldots,n$, form an orthonormal basis of $T_{x_0}X$. We use this basis to define the coordinates $Z$ on $T_{x_0}X\cong \mathbb R^{2n}$ as well as the complex coordinates $z$ on $\mathbb C^{n} \cong \mathbb R^{2n}$, $z_j=Z_{2j-1}+iZ_{2j}, j=1,\ldots,n$.  In this coordinates, we get
\begin{equation}\label{e:defP} 
\mathcal P_{x_0}(Z, Z^\prime)=\frac{1}{(2\pi)^n}\prod_{j=1}^na_j \exp\left(-\frac 14\sum_{j=1}^na_j(|z_j|^2+|z_j^\prime|^2- 2z_j\bar z_j^\prime) \right).
\end{equation}

Denote by $Q_{x_0}$ the second order term in the Taylor expansion of $\tau$ at $x_0$ (in normal coordinates near $x_0$):
\[
Q_{x_0}(Z)=\left(\frac12 {\rm Hess}\,\tau(x_0)Z,Z\right)=\sum_{|\alpha|=2}(\partial^\alpha\tau)_{x_0}\frac{Z^\alpha}{\alpha!}, \quad Z\in T_{x_0}X\cong \mathbb R^{2n}.
\]
It is a positive definite quadratic form on $T_{x_0}X$.  

We will also need a smooth function $J_{1,2}$ on $X$, which appears in the leading coefficient of the diagonal asymptotic expansion of the generalized Bergman kernel associated with $\Delta^{L^p}$. We refer the reader to Section~\ref{s:3.3} (in particular, \eqref{e:Fq2} and below) for details. 

The model operator $\mathcal D_{x_0}$ associated with a non-degenerate minimum $x_0$ of $\tau$ is the Toeplitz operator in $L^2(T_{x_0}X)$ defined by
\begin{equation}\label{e:model-D}
\mathcal D_{x_0}=\mathcal P_{x_0}(Q_{x_0}(Z)+J_{1,2}(x_0)) : \ker \mathcal L_{x_0}\to \ker \mathcal L_{x_0}.
\end{equation}
It is an unbounded self-adjoint operator with discrete spectrum.  

Under Assumption \ref{a:2}, the minimum set $W=\{x\in X : \tau(x)=\tau_0\}$ is a finite set:
\[
W=\{x_1,\ldots, x_N\}.
\]
Let $\mathcal D$ be the self-adjoint operator on $L^2(T_{x_1}X)\oplus \ldots \oplus  L^2(T_{x_N}X)$ defined by 
\begin{equation}\label{e:def-Tau}
\mathcal D=\mathcal D_{x_1}\oplus \ldots \oplus \mathcal D_{x_N}.
\end{equation}
Denote by $\{\mu_j\}$ the increasing sequence of the eigenvalues of $\mathcal D$ (counted with multiplicities) and by $\{\lambda_j(\Delta^{L^p})\}$ the increasing sequence of the eigenvalues of the operator $\Delta^{L^p}$ (counted with multiplicities).

\begin{thm}\label{t:Bochner-expansion}
For any $j\in \NN$, we have
\begin{equation}\label{e:l1}
\lambda_j(\Delta^{L^p})= p\tau_0+\mu_j+\mathcal O(p^{-1/2}), \quad p\to\infty.
\end{equation}
If $\mu_j$ is a simple eigenvalue of $\mathcal D$, then $\lambda_j(\Delta^{L^p})$ admits a complete asymptotic expansion
\begin{equation}\label{e:l1s}
\lambda_j(\Delta^{L^p})= p\tau_0+\mu_j+\sum_{k=1}^\infty a_{k,j}p^{-k/2}, \quad p\to\infty.
\end{equation}
\end{thm}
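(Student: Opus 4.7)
The plan is to establish matching upper and lower bounds on each $\lambda_j(\Delta^{L^p})$ by quasimode construction combined with IMS localization, and, under the simplicity hypothesis, to iterate the quasimode construction to all orders. The core technical input is a rescaled expansion of $\Delta^{L^p}$ in normal coordinates around each well of $\tau$. Fix a well $x_i\in W$ and a normalized eigenfunction $f\in \ker \mathcal{L}_{x_i}$ of $\mathcal{D}_{x_i}$ with eigenvalue $\mu$. Working in normal coordinates $Z\in T_{x_i}X$ and trivializing $L^p$ by parallel transport along radial geodesics, after the rescaling $Z=u/\sqrt p$ the Bochner Laplacian admits a formal expansion in powers of $p^{-1/2}$ whose leading part is $p(\tau_0+\mathcal{L}_{x_i})$ and whose $O(1)$ term, once compressed to $\ker \mathcal{L}_{x_i}$ by the Bergman projection $\mathcal{P}_{x_i}$, produces exactly the Toeplitz model $\mathcal{D}_{x_i}$: the Taylor expansion of $\tau$ about $x_i$ yields the $Q_{x_i}(Z)$ contribution while the subleading piece of the generalized Bergman kernel (compare Section~\ref{s:3.3}) yields the $J_{1,2}(x_i)$ contribution. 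Transplanting a cutoff of $f(\sqrt p\,\cdot)$ back to $X$ produces a quasimode $\psi_{p,j}$ with $(\Delta^{L^p}-p\tau_0-\mu)\psi_{p,j}=O(p^{-1/2})$, and the Gaussian decay of $f$ keeps the cutoff error $O(p^{-\infty})$. Applied to the first $j$ eigenvectors of $\mathcal{D}$, whose quasimodes are asymptotically orthonormal after rescaling, the min-max principle gives the upper bound $\lambda_j(\Delta^{L^p})\le p\tau_0+\mu_j+O(p^{-1/2})$.

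\emph{Lower bound.} Pick a partition of unity $\{\chi_0,\chi_1,\dots,\chi_N\}$ with $\chi_i$ supported in a small neighborhood of $x_i$ and $\chi_0$ supported in $\{\tau\ge \tau_0+\delta\}$. The IMS localization formula reduces $\Delta^{L^p}$ to a sum of localized pieces modulo the $L^\infty$ error $\sum|\nabla \chi_i|^2$. On $\supp \chi_0$ a Weitzenböck-type inequality of the form $\Delta^{L^p}\ge p\tau - C$ (see \cite{ma-ma08}) pushes those contributions well above the window $[p\tau_0,p\tau_0+C]$. On each $\supp \chi_i$ the rescaled expansion identifies the effective operator on the approximate kernel of $\mathcal{L}_{x_i}$ as $\mathcal{D}_{x_i}+O(p^{-1/2})$, while Agmon-type exponential decay of genuine low-lying eigenfunctions of $\Delta^{L^p}$ away from $W$ rules out spurious eigenvalues in the window. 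Combined with the upper bound, this establishes \eqref{e:l1}.

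\emph{Complete expansion.} For \eqref{e:l1s}, iterate the quasimode construction: seek formal series
\[
\psi_p\sim \sum_{k\ge 0} p^{-k/2}\psi^{(k)}, \qquad \lambda_p\sim p\tau_0+\mu_j+\sum_{k\ge 1}a_{k,j}p^{-k/2},
\]
substitute into the rescaled expansion, and solve the resulting hierarchy $\mathcal{L}_{x_i}\psi^{(k)}=F_k$ by the Fredholm alternative. Simplicity of $\mu_j$ in the spectrum of $\mathcal{D}$ turns each solvability condition into a scalar equation uniquely determining $a_{k,j}$, while $\psi^{(k)}$ is determined modulo the (one-dimensional) eigenspace. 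Borel resummation yields a true quasimode with error $O(p^{-\infty})$; since a simple eigenvalue is spectrally isolated, the ``eigenvalue is better than quasimode'' principle (resolvent plus spectral projection estimates) upgrades this to a genuine asymptotic expansion of $\lambda_j(\Delta^{L^p})$. The main obstacle throughout is the lower bound: showing that no low-lying eigenvalues are missed requires both the Agmon-type decay estimates above and a careful accounting of the $J_{1,2}(x_i)$ correction, which is already present at leading order in \eqref{e:l1} and whose derivation forces the rescaled expansion to be pushed beyond the most obvious leading terms.
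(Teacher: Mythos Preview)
Your overall architecture is sound and your account of how $\mathcal D_{x_i}$ emerges is correct, but there is one concrete gap in the upper bound and one substantive methodological divergence in the lower bound worth flagging.

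\textbf{Upper bound: the bare quasimode is one order too crude.} Transplanting a cutoff of $f(\sqrt p\,\cdot)$ alone does \emph{not} give $(\Delta^{L^p}-p\tau_0-\mu)\psi_{p,j}=O(p^{-1/2})$. In the rescaled picture $\mathcal H_t=\tau_0+\mathcal L_{x_i}+t\mathcal H^{(1)}+t^2\mathcal H^{(2)}+\cdots$ with $t=p^{-1/2}$, the first-order piece $t\mathcal H^{(1)}u_0$ is generically nonzero (only $\mathcal P\mathcal H^{(1)}\mathcal P=0$), so $(\mathcal H_t-\tau_0)u_0=O(t)$, which after undoing the rescaling is $O(p^{1/2})$, not $O(p^{-1/2})$. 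The paper (Theorem~\ref{t:qmodes}) builds $u_t^{(2)}=u_0+tu_1+t^2u_2$ with $u_1=-\mathcal L_{x_0}^{-1}\mathcal O_1 u_0$ and a suitable $u_2$; it is precisely the cross-term $-\mathcal P\mathcal O_1\mathcal L_{x_0}^{-1}\mathcal O_1$ coming from $u_1$ that, together with $\mathcal P\mathcal O_2$, produces the scalar $J_{1,2}(x_0)$ in $\mathcal D_{x_0}$. Without $u_1$ and $u_2$ you do not reach the claimed remainder and cannot even identify $\mu_j$ correctly. Your iterative scheme in the ``Complete expansion'' paragraph is fine, but you need to run it already to order~2 for \eqref{e:l1}.

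\textbf{Lower bound: different route.} Your spatial IMS localization with a partition of unity $\{\chi_0,\chi_1,\ldots,\chi_N\}$ at fixed scale is the classical semiclassical strategy (as in \cite{morin}), but it is \emph{not} what the paper does. The paper instead performs a reduction to the lowest Landau level: it introduces the renormalized Laplacian $\Delta_p=\Delta^{L^p}-p\tau$, uses Agmon estimates together with an IMS-type identity \emph{with respect to the spectral projection} $P_{\mathcal H_p}$ (not a spatial cutoff) to show $\lambda_j(\Delta^{L^p})=\lambda_j(P_{\mathcal H_p}\Delta^{L^p}P_{\mathcal H_p})+O(p^{-1/2+\delta})$, and then recognizes $P_{\mathcal H_p}\Delta^{L^p}P_{\mathcal H_p}$ as a Toeplitz operator to which the eigenvalue asymptotics of \cite{toeplitz} apply directly. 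Your route can be made to work, but the sentence ``the rescaled expansion identifies the effective operator on the approximate kernel of $\mathcal L_{x_i}$ as $\mathcal D_{x_i}+O(p^{-1/2})$'' hides a genuine Grushin/Feshbach reduction that you have not carried out: you must show that the Schur complement of the localized operator with respect to $\ker\mathcal L_{x_i}$ is $\mathcal D_{x_i}+O(p^{-1/2})$ in a norm strong enough to transfer to eigenvalue lower bounds, and you must reconcile the fixed-scale IMS error $\sum|\nabla\chi_i|^2=O(1)$ with the $O(1)$ spacing of the $\mu_j$. The paper's approach trades this local hard analysis for the Toeplitz black box; your approach avoids that machinery but owes the reader the Grushin step.
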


In the case when the line bundle is trivial and the minimum is unique, a similar result is obtained in the recent work \cite{morin} by a different method. In dimension two, these results were obtained in \cite{HM01,HK-Shubin} (see also \cite{HK15,RV15} for the case of excited states).

\begin{rem}
Recall the relation of our Bochner Laplacian with the semiclasssical magnetic Schr\"odinger operator. Assume that the Hermitian line bundle $(L,h^L)$ on $X$ is trivial, that is, $L=X\times \CC$ and $|(x,z)|^2_{h^L}=|z|^2$ for $(x,z)\in X\times \CC$. Then the Hermitian connection $\nabla^L$ can be written as $\nabla^L=d-i \mathbf A$ with some real-valued 1-form $\mathbf A$ (the magnetic potential). Its curvature $R^L$ is given by 
\begin{equation}\label{e:RLB}
R^L=-i\mathbf B,
\end{equation}
where $\mathbf B=d\mathbf A$ is a real-valued 2-form (the magnetic field). For the form $\omega$ defined by \eqref{e:def-omega}, we have
\[
\omega=\frac{1}{2\pi}\mathbf B. 
\]
The associated Bochner Laplacian $\Delta^{L^p}$ is related with the semiclassical magnetic Schr\"odinger operator
\[
H^{\hbar}=(i\hbar d+\mathbf A)^*(i\hbar d+\mathbf A), \quad \hbar>0
\]
by the formula
\[
\Delta^{L^p}=\hbar^2H^{\hbar}, \quad \hbar=\frac{1}{p},\quad p\in \NN. 
\]
Let $B : TX\to TX$ be a skew-adjoint endomorphism such that 
\[
\mathbf B(u,v)=g(Bu,v), \quad u,v\in TX. 
\]
Then we have 
\[
J_0=\frac{1}{2\pi}B,\quad J=B(B^*B)^{-1/2}, \quad
\mathcal J=-iB.
\]
Finally, the function $\tau$ coincides with the magnetic intensity:
 \[
 \tau=\frac 12 \operatorname{Tr}(B^*B)^{1/2}= \operatorname{Tr}^+(B). 
 \]

The assumption that the 2-form $\omega$ is non-degenerate means that the magnetic field is non-degenerate. Assumption~\ref{a:2} states that the magnetic field has discrete wells. 
 \end{rem}

\begin{rem} 
Recall the description of the Toeplitz operator $\cT(Q)=\cP Q : \ker \mathcal L_{x_0}\to \ker \mathcal L_{x_0}$ in $L^2(\RR^{2n})$, where $Q=Q(z,\bar z)$ is a polynomial in $\CC^n\cong \RR^{2n}$, the operator $\mathcal L_{x_0}$ is given by \eqref{e:defL0} and $\cP$ is given by \eqref{e:defP}. We refer to \cite{toeplitz} for more details.  

First, using a simple scaling, this operator is unitarily equivalent to the Toeplitz operator $\mathcal T^0(P)=\Pi P$ in the Fock space $\cF_n\subset L^2(\CC^n; e^{-\frac{1}{2}|z|^2}dz)$ with $P$ given by
\[
P(z,\bar z)=Q\left(\frac{\sqrt{2}}{\sqrt{a_1}}z_1, \ldots, \frac{\sqrt{2}}{\sqrt{a_n}}z_n, \frac{\sqrt{2}}{\sqrt{a_1}}\bar z_1, \ldots, \frac{\sqrt{2}}{\sqrt{a_n}}\bar z_n\right), \quad z\in \CC^n.
\] 

Next, we use the well-known relation between the Bargmann-Fock and the Schr\"odinger presentations via the Bargmann transform $B: L^2(\RR^n)\to \cF_n$. In the terminology of \cite{Berezin71}, for a polynomial $P$ in $\CC^n$, the operator $B^{-1}\cT^0(P)B$ is the differential operator with polynomial coefficients in $\RR^n$ with anti-Wick symbol $P(\bar z,z)$. One can compute the Weyl symbol of this operator by the well-known formula (see, for instance, \cite{Berezin71}). In particular, if $P$ is a positive definite quadratic form, then
\[
B^{-1}\cT^0(P)B=Op_w(\tilde{P})+\frac{\operatorname{tr}(\tilde{P})}{2}, 
\]
where $\tilde P$ is a quadratic form on $\RR^{2n}\cong T^*\RR^n$, corresponding to $P$ under the linear isomorphism $(x,\xi)\in \RR^{2n}\mapsto z\in \CC^n$, $z_k=\frac{1}{\sqrt{2}}(x_k-i\xi_k), k=1,\ldots,n$, $Op_w(\tilde{P})$ is the pseudodifferential operator in $\RR^n$ with Weyl symbol $\tilde{P}$.

In the case when $n=1$ and $Q$ is a positive definite quadratic form on $\CC\cong \RR^2$, the eigenvalues of $\mathcal T(Q)$ are given by:
\[
\lambda_j=\frac{2\sqrt{D}}{\tau_0}j+\frac{A^2}{2\tau_0}, \quad j=0,1,\ldots,
\]
where $D=\det Q$, $A=\operatorname{tr} Q^{1/2}$.
\end{rem}

The paper is organized as follows. In Section~\ref{s:upper} we construct approximate eigenfunctions of the Bochner Laplacian $\Delta^{L^p}$. This allows us to prove upper bounds for its low-lying eigenvalues. In Section~\ref{s:lower} we prove lower bounds for the  low-lying eigenvalues of $\Delta^{L^p}$ and complete the proof of Theorem~\ref{t:Bochner-expansion}. The proof of lower bounds is based on several techniques. First, we prove some Agmon type estimates and localization results for the associated eigensections. 
Then we consider the renormalized Bochner Laplacian $\Delta_p=\Delta^{L^p}-p\tau$ and the associated generalized Bergman projection $P_{\mathcal H_p}$ \cite{Gu-Uribe,ma-ma08}. Under weak assumptions, we show the asymptotic behavior of the low-lying eigenvalues of $\Delta^{L^p}$ is equivalent to the asymptotic behavior of the low-lying eigenvalues of $P_{\mathcal H_p}\Delta^{L^p}P_{\mathcal H_p}$. Recall that $P_{\mathcal H_p}$ is the orthogonal projection to the eigenspace of $\Delta_p$, corresponding to the eigenvalues localized near zero uniformly in $p$. Therefore, this result can be considered as the reduction of the problem to the lowest Landau level. Finally, we use the fact that the operator $P_{\mathcal H_p}\Delta^{L^p}P_{\mathcal H_p}$ belongs to the algebra of Toeplitz operators on $X$ associated with the renormalized Bochner Laplacian $\Delta_p$, which was constructed in \cite{ioos-lu-ma-ma,bergman}. We apply the results of \cite{toeplitz} on eigenvalue asymptotics of the low-lying eigenvalues of Toeplitz operators with discrete wells on symplectic manifolds. 

\section{Upper bounds} \label{s:upper}

\subsection{Approximate eigenfunctions: main result}
The purpose of this section is to prove the following accurate upper
bound for the eigenvalues of the operator $\Delta^{L^p}$.

\begin{thm}\label{t:qmodes}
Under Assumption \ref{a:2}, for any natural $j$, there exist $\phi^p_{j2}\in C^\infty(X)$,
$C_{j,2}>0$ and $p_{j,2}>0$ such that
\begin{equation}\label{e:orth2}
(\phi^p_{j_12},\phi^p_{j_22}) =\delta_{j_1j_2}+
\mathcal O_{j_1,j_2}(p^{-1})
\end{equation}
and, for any $p>p_{j,2}$,
\begin{equation}\label{e:Hh2}
\|\Delta^{L^p}\phi^p_{j2}- (p\tau_0+\mu_j)\phi^p_{j2}\|\leq
C_{j,2}p^{-\frac{1}{2}}\|\phi^p_{j2}\|,
\end{equation}
where $\mu_j$ is the $j$th eigenvalue of $\mathcal D$.

If, for some $j$, $\mu_j$ is a simple eigenvalue of $\mathcal D_{x_0}$ for some $x_0$,  then there exists a sequence $\{\mu_{j,\ell} : \ell=0,1,2,\ldots\}$ with
\[
\mu_{j,0} = \tau_0, \quad \mu_{j,1}=0,\quad \mu_{j,2}=\mu_j,
\]
and for any $N$, there exist $\phi^p_{jN}\in C^\infty(X)$,
$C_{j,N}>0$ and $p_{j,N}>0$ such that
\begin{equation}\label{e:orth}
(\phi^p_{j_1N},\phi^p_{j_2N}) =\delta_{j_1j_2}+
\mathcal O_{j_1,j_2}(p^{-1})
\end{equation}
and, for any $p>p_{j,N}$,
\begin{equation}\label{e:Hh}
\|\Delta^{L^p}\phi^p_{jN}- \mu_{jN}^p \phi^p_{jN}\|\leq
C_{j,N}p^{-\frac{N-1}{2}}\|\phi^p_{jkN}\|,\,
\end{equation}
where
\[
\mu_{jN}^p=p\sum_{\ell=0}^N \mu_{j,\ell} p^{-\frac \ell 2}
\]
\end{thm}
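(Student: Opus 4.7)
The plan is to construct quasimodes concentrated near each minimum $x_0 \in W$ of $\tau$ by a formal WKB-type expansion in powers of $p^{-1/2}$, then cut off and transfer to $X$. Fix $x_0 \in W$, choose normal coordinates on $X$ centred at $x_0$ together with a synchronous (radial) trivialization of $L$ over a small ball, and rescale via $u = \sqrt{p}\,Z$. The pulled-back Bochner Laplacian on $T_{x_0}X \cong \mathbb R^{2n}$ admits a formal Taylor expansion
\begin{equation*}
\widetilde{\Delta}^{L^p} = p\,\mathcal L_{x_0} + p\tau_0 + \sum_{r\geq 1} p^{1-r/2}\,\mathcal A_r,
\end{equation*}
where $\mathcal L_{x_0}$ is the model operator from \eqref{e:defL0} and each $\mathcal A_r$ is a differential operator with polynomial coefficients on $T_{x_0}X$ determined by the Taylor jets at $x_0$ of $g$, $\nabla^L$ and $\tau$; in particular $\mathcal A_2$ contains the multiplication operator $Q_{x_0}(Z)$ arising from the Taylor expansion of $\tau$.

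Next I would solve formally. I seek
\begin{equation*}
\phi \sim \sum_{k\geq 0} p^{-k/2}\phi_k, \qquad \lambda \sim p\tau_0 + \sum_{k\geq 2} p^{1-k/2}\mu_{j,k},
\end{equation*}
with $\phi_k$ in the Schwartz class on $T_{x_0}X$ and $\mu_{j,2} := \mu_j$. Matching powers of $p^{1/2}$ produces a cascade of equations: the top-order equation forces $\phi_0 \in N := \ker \mathcal L_{x_0}$, and the $k$-th equation has the form $\mathcal L_{x_0}\phi_k = F_k - \mu_{j,k}\phi_0$, where $F_k$ is a polynomial expression in $(\phi_0,\ldots,\phi_{k-1})$ and $(\mu_{j,2},\ldots,\mu_{j,k-1})$. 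Because $\mathcal L_{x_0}$ has a spectral gap above $N$, each such equation is solvable in the Schwartz class precisely when the Fredholm condition $\mathcal P_{x_0}(F_k - \mu_{j,k}\phi_0)=0$ holds, and then $\phi_k$ is fixed modulo $N$ by inverting $\mathcal L_{x_0}$ on $N^\perp$. At the order determining $\mu_{j,2}$, after eliminating $\phi_1 \equiv -\mathcal L_{x_0}^{-1}\mathcal A_1\phi_0 \pmod{N}$ (the order-$p^{1/2}$ compatibility condition $\mathcal P_{x_0}\mathcal A_1\phi_0=0$ being satisfied by parity in $Z$ together with $\nabla\tau(x_0)=0$), the Fredholm condition reduces to
\begin{equation*}
\mathcal P_{x_0}\bigl(\mathcal A_2 - \mathcal A_1 \mathcal L_{x_0}^{-1}\mathcal A_1\bigr)\mathcal P_{x_0}\,\phi_0 = \mu_j\phi_0,
\end{equation*}
which I expect to identify with $\mathcal D_{x_0}\phi_0 = \mu_j\phi_0$ on $N$, forcing $\phi_0$ to be an eigenfunction of $\mathcal D_{x_0}$. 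When $\mu_j$ is simple, the $N$-component of each higher $\phi_k$ and the correction $\mu_{j,k+2}$ are uniquely determined by the next Fredholm condition, so the expansion continues to all orders.

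To produce genuine sections on $X$, I would multiply each $\phi_k$ by a smooth cutoff supported in a small ball about $0 \in T_{x_0}X$, transfer back to $X$ via the exponential map and the trivialization (absorbing the factor $p^{n/2}$ from the change of variables to normalize in $L^2(X,L^p)$), and truncate the sum at $k=2$ for \eqref{e:Hh2} or at $k=N$ for \eqref{e:Hh}. Since $\phi_0$ is a Gaussian by \eqref{e:defP} and each $\phi_k$ is a polynomial times that Gaussian, the cutoff introduces $\mathcal O(p^{-\infty})$ errors both in the residual and in the $L^2$-norms. For distinct minima $x_i\neq x_{i'}$ the supports are essentially disjoint, so the corresponding quasimodes are orthogonal up to $\mathcal O(p^{-\infty})$; at a fixed minimum, \eqref{e:orth2} and \eqref{e:orth} follow from the orthogonality of the leading profiles $\phi_0$ as Toeplitz eigenfunctions in $L^2(T_{x_i}X)$, with subleading corrections contributing at most $\mathcal O(p^{-1})$ and tidied up by a Gram--Schmidt step if necessary.

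The main obstacle is the algebraic identification at the critical order: verifying that $\mathcal P_{x_0}\bigl(\mathcal A_2 - \mathcal A_1 \mathcal L_{x_0}^{-1}\mathcal A_1\bigr)\mathcal P_{x_0}$ on $N$ coincides with $\mathcal P_{x_0}\bigl(Q_{x_0}(Z) + J_{1,2}(x_0)\bigr)$ as in \eqref{e:model-D}, with the precise constant $J_{1,2}(x_0)$ from Section~\ref{s:3.3}. This requires careful bookkeeping of the Taylor expansions of $g$ and $\nabla^L$ up to the relevant order, parity arguments to discard vanishing contributions, and invoking the asymptotics of the generalized Bergman kernel of the renormalized Bochner Laplacian, which supplies the explicit formula for $J_{1,2}$. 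The remaining ingredients---solvability of Fredholm equations in Schwartz class, Gaussian decay of the $\phi_k$, and cutoff error estimates---are routine in this framework.
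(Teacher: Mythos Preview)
Your proposal is correct and follows essentially the same route as the paper: normal coordinates and radial trivialization near $x_0$, rescaling by $p^{-1/2}$, formal Taylor expansion of the rescaled operator (your $\mathcal A_r$ are the paper's $\mathcal H^{(r)}$), order-by-order solution with the Fredholm alternative on $N=\ker\mathcal L_{x_0}$, identification of the effective operator at order $t^2$ with $\mathcal D_{x_0}$, continuation to all orders when $\mu_j$ is simple, and finally cutoff and transfer back to $X$. Two small points where the paper is slightly more precise: it conjugates by the volume factor $\kappa^{1/2}$ before rescaling (which you should include to get a clean symmetric expansion), and it obtains $\mathcal P\mathcal H^{(1)}\mathcal P=0$ by citing \cite[Theorem~1.16]{ma-ma08} rather than by a bare parity argument---your ``parity in $Z$'' is the right heuristic but not a complete proof, since $N$ is not spanned by even functions; the actual vanishing uses the finer structure of $\mathcal O_1$ and the Bergman kernel, exactly the same machinery that produces the scalar $J_{1,2}(x_0)$ you already flag as the main obstacle.
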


Since the operator $\Delta^{L^p}$ is self-adjoint, using Spectral Theorem, we
immediately deduce the existence of eigenvalues near the points
$\mu_{jN}^p$.

\begin{cor}\label{c:dist}
For any natural $j$, there exist $C_{j2}>0$ and
$p_{j2}>0$ such that, for any $p>p_{j2}$,
\[
{\rm dist}(p\tau_0+\mu_j, \sigma(\Delta^{L^p}))\leq
C_{j2}p^{-\frac{1}{2}}.
\]
If, for some $j$, $\mu_j$ is a simple eigenvalue of $\mathcal D$,  then, for any natural $N$, there exist $C_{jN}>0$ and $p_{jN}>0$ such that, for any $p>p_{jN}$,
\[
{\rm dist}(\mu_{jN}^p, \sigma(\Delta^{L^p}))\leq
C_{jN}p^{-\frac{N-1}{2}}.
\]
\end{cor}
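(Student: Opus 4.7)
The plan is to deduce the corollary directly from Theorem~\ref{t:qmodes} by invoking the standard quasimode-to-spectrum comparison that comes from the spectral theorem for self-adjoint operators. Recall the statement: if $A$ is a self-adjoint operator on a Hilbert space, $u\in \Dom(A)$ with $u\neq 0$, and $\|(A-\lambda)u\|\le \varepsilon\|u\|$ for some $\lambda\in\RR$, $\varepsilon\ge 0$, then $\mathrm{dist}(\lambda,\sigma(A))\le \varepsilon$. This is immediate from the spectral representation, since
\[
\|(A-\lambda)u\|^{2}=\int_{\sigma(A)}(t-\lambda)^{2}\,d(E_{t}u,u)\ge \mathrm{dist}(\lambda,\sigma(A))^{2}\|u\|^{2},
\]
where $E_{t}$ is the spectral resolution of $A$. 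I would apply this with $A=\Delta^{L^{p}}$, which is self-adjoint on $L^{2}(X,L^{p})$.

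For the first statement, I take $u=\phi^{p}_{j2}$ as produced by Theorem~\ref{t:qmodes}. The diagonal case $j_{1}=j_{2}=j$ of \eqref{e:orth2} gives $\|\phi^{p}_{j2}\|^{2}=1+\mathcal{O}(p^{-1})$, so in particular $\phi^{p}_{j2}\neq 0$ for all $p$ sufficiently large (say $p>p_{j,2}$, enlarging this threshold if needed). The bound \eqref{e:Hh2} then reads $\|(\Delta^{L^{p}}-(p\tau_{0}+\mu_{j}))\phi^{p}_{j2}\|\le C_{j,2}p^{-1/2}\|\phi^{p}_{j2}\|$, and the quasimode lemma yields
\[
\mathrm{dist}\bigl(p\tau_{0}+\mu_{j},\sigma(\Delta^{L^{p}})\bigr)\le C_{j,2}\,p^{-1/2},
\]
which is the first assertion of the corollary.

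For the second, refined statement, assuming $\mu_{j}$ is a simple eigenvalue of $\mathcal{D}$, Theorem~\ref{t:qmodes} provides for each $N\in\NN$ a quasimode $\phi^{p}_{jN}$ satisfying \eqref{e:orth} and \eqref{e:Hh}, and as before \eqref{e:orth} implies $\phi^{p}_{jN}\neq 0$ for $p>p_{j,N}$ (after possibly enlarging this threshold). Applying the same quasimode lemma with $\lambda=\mu_{jN}^{p}=p\sum_{\ell=0}^{N}\mu_{j,\ell}\,p^{-\ell/2}$ and $\varepsilon=C_{j,N}p^{-(N-1)/2}$ gives
\[
\mathrm{dist}\bigl(\mu_{jN}^{p},\sigma(\Delta^{L^{p}})\bigr)\le C_{j,N}\,p^{-(N-1)/2},
\]
which is the second assertion. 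There is really no hard step here: all of the analytic work has been carried out in the construction of the quasimodes in Theorem~\ref{t:qmodes}, and the corollary is a one-line application of the spectral theorem together with the $L^{2}$-normalization provided by \eqref{e:orth2} and \eqref{e:orth}.
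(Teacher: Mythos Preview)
Your proposal is correct and follows exactly the approach indicated in the paper: the corollary is stated there as an immediate consequence of Theorem~\ref{t:qmodes} via the spectral theorem for self-adjoint operators, and you have simply spelled out the standard quasimode-to-spectrum inequality and checked that the normalizations \eqref{e:orth2}, \eqref{e:orth} guarantee the quasimodes are nonzero for large $p$.
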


\begin{rem}\label{r:upper-bound}
As an immediate consequence of Theorem~\ref{t:qmodes}, we
deduce that, for any natural $j$, there exist $C_{j2}>0$ and $p_{j2}>0$
such that, for any $p>p_{j2}$,  
\[
\lambda_j(\Delta^{L^p})\leq p\tau_0+\mu_j + C_{j2}p^{-\frac{1}{2}}.
\]
If, for some $j$, $\mu_j$ is a simple eigenvalue of $\mathcal D$,  then, for any natural $N$, there exist $C_{jN}>0$ and $p_{jN}>0$ such that, for any $p>p_{jN}$, 
\[
\lambda_j(\Delta^{L^p})\leq \mu_{jN}^p + C_{jN}p^{-\frac{N-1}{2}}.
\]

In particular, this implies the upper bound in Theorem~\ref{t:Bochner-expansion}.
\end{rem}

The rest of this section is devoted to the proof of Theorem \ref{t:qmodes}.

\subsection{Rescaling and formal asymptotic expansions}\label{s:rescaling}
Fix $j\in \NN$. Without loss of generality, we can assume that $\mu_j$ is an eigenvalue of $\mathcal D_{x_0}$ for some $x_0\in W$. The approximate eigensections $\phi^p_{jk}\in C^\infty(X.L^p)$, which we are going to construct, will be supported in a small neighborhood of $x_0$. So, in a neighborhood of $x_0$, we will consider some special local coordinate system with coordinates $Z\in \mathbb R^{2n}$ such that $x_0$ corresponds to $0$ and a trivialization oof the line bundle $L$ over it. We will only apply our operator on functions which are a product of cut-off functions with functions of the form of linear combinations of terms like $p^{-\nu}w(p^{1/2}Z)$ with $w$ in $\cS(\RR^{2n})$. These functions are consequently $O(p^{-\infty})$ outside a fixed neighborhood of $0$. We will start by doing the computations formally in the sense that everything is determined modulo $O(p^{-\infty})$, and any smooth function will be replaced by its Taylor's expansion. It is then easy to construct non formal approximate eigenfunctions. For the construction of the local coordinate system and computation of a formal asymptotic expansion of the Bochner Laplacian in rescaled normal coordinates, we will use results of \cite[Sections 1.1 and 1.2]{ma-ma08}, which are inspired by the analytic localization technique of Bismut-Lebeau \cite{BL}.  

First, we introduce normal coordinates near $x_0$. Let $a^X$ be the injectivity radius of $(X,g)$. We will identify $B^{T_{x_0}X}(0,a^X)$ with $B^{X}(x_0,a^X)$ by the exponential map $\operatorname{exp}^X : T_{x_0}X\to X$. For $Z\in B^{T_{x_0}X}(0,a^X)$ we identify $L_Z$ to $L_{x_0}$ by parallel transport with respect to the connection $\nabla^L$ along the curve $\gamma_Z : [0,1]\ni u \to \exp^X_{x_0}(uZ)$. Consider the line bundle $L_0$ with fibers $L_{x_0}$ on $T_{x_0}X$. Denote by $\nabla^{L_0}$, $h^{L_0}$ the connection and the metric on the restriction of $L_0$ to $B^{T_{x_0}X}(0,a^X)$ induced by the identification $B^{T_{x_0}X}(0,a^X)\cong B^{X}(x_0,a^X)$ and the trivialization of $L$ over $B^{T_{x_0}X}(0,a^X)$. 

Let $\{e_j\}_{j=1,\ldots,2n}$ be an oriented orthonormal basis of $T_{x_0}X$. It gives rise to an isomorphism $X_0:=\mathbb R^{2n}\cong T_{x_0}X$.  Denote by $dv_{TX}$ the Riemannian volume form of $(T_{x_0}X, g^{T_{x_0}X})$ and by $dv_X$ the volume form on $B^{T_{x_0}X}(0,a^X)$, corresponding to the Riemannian volume form on $B^{X}(x_0,a^X)$ under identification $B^{T_{x_0}X}(0,a^X)\cong B^{X}(x_0,a^X)$. Then we have 
\[
dv_{X}(Z)=\kappa_{x_0}(Z)dv_{TX}(Z), \quad Z\in B^{T_{x_0}X}(0,a^X),
\] 
with some smooth positive function $\kappa_{x_0}$ on $B^{T_{x_0}X}(0,a^X)$.

Now we use a rescaling introduced in \cite[Section 1.2]{ma-ma08}. 
Denote $t=\frac{1}{\sqrt{p}}$. For $s\in C^\infty(\mathbb R^{2n})$, set
\[
S_ts(Z)=s(Z/t), \quad Z\in \mathbb R^{2n}.
\]
The rescaled connection $\nabla_t$ is defined as 
\[
\nabla_t=tS^{-1}_t\kappa^{\frac 12}\nabla^{L_0^p}\kappa^{-\frac 12}S_t.
\]
Let $\Gamma^L$ be the connection form of $\nabla^L$ with respect to some fixed frames for $L$ which are parallel along the curves $\gamma_Z : [0,1]\ni u \to \exp^X_{x_0}(uZ)$ under the chosen trivializations on $B^{T_{x_0}X}(0,a^X)$. Then on $B^{T_{x_0}X}(0,a^X/t)$
\begin{align*}
\nabla_{t,e_i}&=\kappa^{\frac 12}(tZ)\left(\nabla_{e_i}+\frac{1}{t}\Gamma^L(e_i)(tZ)\right)\kappa^{-\frac 12}(tZ)\\
&=\nabla_{e_i}+\frac{1}{t}\Gamma^L(e_i)(tZ)-t\left(\kappa^{-1}(e_i\kappa)\right)(tZ).
\end{align*}
Recall that 
\[
\sum_{|\alpha|=r}(\partial^\alpha\Gamma^L)_{x_0}(e_j)\frac{Z^\alpha}{\alpha!}=\frac{1}{r+1}\sum_{|\alpha|=r-1}(\partial^\alpha  R^L)_{x_0}(\mathcal R,e_j)\frac{Z^\alpha}{\alpha!}.
\]
where $\mathcal R(Z)=\sum_{j=1}^{2n} Z_je_j\in \mathbb R^{2n}\cong T_ZX_0$ denote the radial vector field on $X_0$.  In particular, 
\[
\Gamma^L(e_j)(Z)=\frac{1}{2}R^L_{x_0}(\mathcal R,e_j)+O(|Z|^2). 
\]
For the rescaled operator $\mathcal H_t$ on $B^{T_{x_0}X}(0,a^X/t)$ defined as 
\begin{equation}\label{scaling}
\mathcal H_t=t^2S^{-1}_t\kappa^{\frac 12}\Delta^{L^p}\kappa^{-\frac 12}S_t,
\end{equation}
we get
\[
\mathcal H_t=-\sum_{j,k=1}^{2n} g^{jk}(tZ)\left[\nabla_{t,e_j}\nabla_{t,e_k}- t\sum_{\ell=1}^{2n}\Gamma^{\ell}_{jk}(tZ)\nabla_{t,e_\ell}\right].
\] 

Let us develop the coefficients of the rescaled operator $\mathcal H_t$ in Taylor series in $t$. The resulting asymptotic expansion is given in \cite[Theorem 1.4]{ma-ma08}. For any $m\in \NN$, we get
\begin{equation}
\mathcal H_t=\mathcal H^{(0)}+\sum_{j=1}^m \mathcal H^{(j)}t^j+\mathcal O(t^{m+1}), 
\end{equation}
where there exists $m^\prime\in \NN$ so that for every $k\in\NN$ and $t\in [0,1]$ the derivatives up to order $k$ of the coefficients of the operator $\mathcal O(t^{m+1})$ are bounded by $Ct^{m+1}(1+|Z|)^{m^\prime}$. 

The leading term $\mathcal H^{(0)}$ is given by
\[ 
\mathcal H^{(0)}=-\sum_{j=1}^{2n} \left(\nabla_{e_j}+\frac 12 R^L_{x_0}(Z,e_j)\right)^2=\mathcal L_{x_0}+\tau_0.
\]
The spectrum of $\mathcal H^{(0)}$ consists of discrete set of eigenvalues of infinite multliplicity (see, for instance, \cite[Theorem 1.15]{ma-ma08}). In particular, the lowest eigenvalue of $\mathcal H^{(0)}$ is $\tau_0$. 

For the next terms, we have
\[
\mathcal H^{(j)}=\mathcal O_j+\sum_{|\alpha|=j}(\partial^\alpha\tau)_{x_0}\frac{Z^\alpha}{\alpha!},\quad j=1,2,\ldots,
\]
where $\mathcal O_j$ are second order differential operators with polynomial coefficients. In particular, 
\[
\mathcal H^{(1)}=\mathcal O_1,\quad  \mathcal H^{(2)}=\mathcal O_2+Q_{x_0}(Z).
\]
Explicit formulas for $\mathcal O_1$ and $\mathcal O_2$ are given in \cite[Theorem 1.4]{ma-ma08}. We refer the reader to \cite{ma-ma08} for more details. 

\subsection{Construction of approximate eigenfunctions}
First, we construct a formal eigenfunction $u_t$ of the operator
$\mathcal H_t$ admitting an asymptotic expansion in the form of a
formal asymptotic series in powers of $t$
\[
u_t=\sum_{\ell=0}^\infty u_\ell t^{\ell}, \quad u^{(\ell)}\in
{\cS(\RR^{2n})},
\]
with the corresponding formal eigenvalue
\[
\lambda_t=\sum_{\ell=0}^\infty\lambda_\ell t^{\ell},
\]
such that
\[
\mathcal H_tu_t-\lambda_t u_t=0
\]
in the sense of asymptotic series in powers of $t$.
\medskip\par
{\bf The first terms.} Looking at the coefficient of $t^0$,  we
obtain:
\[
\mathcal H^{(0)}u_0=\lambda_0u_0\,.
\]
Thus, we take
\begin{equation}\label{e:0}
\lambda_0=\tau_0, \quad
u_0=\mathcal Pu_0\in N, \quad \|u_0\|=1,
\end{equation}
where $u_0$ will be determined later.

Looking at the coefficient of $t^{1}$,  we obtain:
\[
\mathcal H^{(0)}u_1+\mathcal H^{(1)}u_0=\lambda_0u_1+\lambda_1u_0 \Leftrightarrow \mathcal L_{x_0}u_1=\lambda_1u_0-\mathcal O_1u_0.
\]
The orthogonality condition implies that
\[
\lambda_1=\mathcal P\mathcal O_1u_0.
\]
By \cite[Thm 1.16]{ma-ma08}, we have $\mathcal P\mathcal O_1\mathcal P=0$. Therefore
\[
\lambda_1=0.
\]
Under this condition, we get
\begin{equation}\label{e:1/2}
u_1=v_1+v_1^\bot,
\end{equation}
where
\[
v_1^\bot=-\mathcal L_{x_0}^{-1}\mathcal O_1u_0
\]
and $v_1\in N$, $v_1\bot u_0$, will be determined later.

Next, the cancelation of the coefficient of $t^2$ gives:
\begin{equation}\label{e:1}
\mathcal L_{x_0}u_2= \lambda_2u_0-\mathcal O_1u_1-\mathcal H^{(2)}u_0.
\end{equation}
The orthogonality condition for \eqref{e:1} implies that
\begin{equation}\label{e:1ort}
\lambda_2u_0-\mathcal P \mathcal O_1u_1-\mathcal P\mathcal H^{(2)}u_0=0.
\end{equation}

Consider the operator $\mathcal D_{x_0} : N\to N$ given by
\begin{equation}\label{e:T}
\mathcal D_{x_0}=\mathcal P\mathcal H^{(2)}-\mathcal P\mathcal O_1 \mathcal L_{x_0}^{-1}\mathcal O_1. 
\end{equation}
Observe that 
\[
\mathcal D_{x_0}=F_{1,2}+\mathcal P Q_{x_0}(Z),
\]
where (cf. \cite[Eq. (1.109)]{ma-ma08})
\begin{equation}\label{e:F12}
F_{1,2}=\mathcal P\mathcal O_2-\mathcal P \mathcal O_1\mathcal L_{x_0}^{-1}\mathcal O_1. 
\end{equation}
One can show that the operator $F_{1,2}: N\to N$ is a scalar operator:
\[
F_{1,2}=J_{1,2}(x_0)\in \RR,
\]
with the function $J_{1,2}\in C^\infty(X)$ mentioned in Introduction (see Section~\ref{s:3.3}, in particular, \eqref{e:Fq2} and below). So this definition of $\mathcal D_{x_0}$ agrees with \eqref{e:model-D}.

By \eqref{e:1/2}, the equation \eqref{e:1ort} can be rewritten as
\[
\mathcal D_{x_0} u_0=\lambda_2u_0.
\]
Thus, we put
\[
\lambda_2=\mu_{j},
\]
where $\mu_{j}$ is the fixed eigenvalue of $\mathcal D_{x_0}$ and
\[
u_0=\Psi_{j},
\]
where $\Psi_{j}\in N$ is a normalized eigenfunction of $\mathcal D_{x_0}$ associated to $\mu_{j}$.

Moreover, we conclude that $u_2$ is a solution of \eqref{e:1},
which can be written as
\begin{equation}\label{e:u2}
u_2=v_2+v_2^\bot,
\end{equation}
where $v_2^\bot\in N^\bot$ is a solution of \eqref{e:1}:
\[
v_2^\bot=\mathcal L_{x_0}^{-1}(\lambda_2u_0-\mathcal P^\bot\mathcal O_1u_1-\mathcal P^\bot\mathcal H^{(2)}u_0),\quad \mathcal P^\bot=1-\mathcal P,
\]
and $v_2\in N$ will be determined later. By \eqref{e:1/2} and \eqref{e:1ort}, we have
\[
v_2^\bot=-\mathcal L_{x_0}^{-1}\mathcal P^\bot\mathcal O_1v_1+f_2,
\]
where $f_2$ is known:
\[
f_2=\mathcal L_{x_0}^{-1}(\mathcal P^\bot\mathcal O_1\mathcal L_0^{-1}\mathcal O_1-\mathcal P^\bot\mathcal H^{(2)})u_0.
\]

Now the cancelation of the coefficient of $t^{3}$ gives:
\begin{equation}\label{e:3/2}
\mathcal L_{x_0}u_3= \lambda_3u_0+\lambda_2u_1-\mathcal H^{(3)}u_0-\mathcal H^{(2)}u_1-\mathcal O_1u_2.
\end{equation}
The orthogonality condition for \eqref{e:3/2} reads as
\begin{equation}\label{e:3/2ort}
\lambda_3u_0+\lambda_2v_1-\mathcal P\mathcal H^{(3)}u_0-\mathcal P\mathcal H^{(2)}u_1-\mathcal P\mathcal O_1u_2=0.
\end{equation}
Using \eqref{e:1/2} and \eqref{e:u2}, it can be written as
\begin{multline}\label{e:3/2ort2}
\mathcal D_{x_0}v_1-\lambda_2v_1\\ =
\lambda_3u_0-\mathcal P\mathcal H^{(3)}u_0+\mathcal P\mathcal H^{(2)}\mathcal L_{x_0}^{-1}\mathcal O_1u_0-\mathcal P\mathcal O_1\mathcal L_{x_0}^{-1}(\mathcal P^\bot\mathcal O_1\mathcal L_{x_0}^{-1}\mathcal O_1-\mathcal P^\bot\mathcal H^{(2)})u_0.
\end{multline}

In general, this equation may have no solution for any choice of $\lambda_3$. So we have to stop here.  Put
\[
u_t^{(2)}= u_0 + u_1t+u_2t^2, \quad
\lambda_t^{(1)}=\lambda_0+\lambda_2t^2.
\]
Then we have
\[
\mathcal H_tu_t^{(2)}-\lambda_t^{(2)}u_t^{(2)}=O(t^{3}).
\]

Assume that $\lambda_2=\mu_j$ is a simple eigenvalue of $\mathcal D_{x_0}$. Then we can proceed further. The equation \eqref{e:3/2ort2} has a solution $v_1$ if and only if:
\[
\lambda_3=\langle (\mathcal P\mathcal H^{(3)}-\mathcal P\mathcal H^{(2)}\mathcal L_{x_0}^{-1}\mathcal O_1+\mathcal P\mathcal O_1\mathcal L_{x_0}^{-1}(\mathcal P^\bot\mathcal O_1\mathcal L_{x_0}^{-1}\mathcal O_1-\mathcal P^\bot\mathcal H^{(2)}))u_0, \Psi_j\rangle.
\]
Under this condition, there exists a unique solution $v_1$ of \eqref{e:3/2ort}, orthogonal to $\Psi_j$. 

With these choices, we find a solution $u_3$ of \eqref{e:3/2} of the form
\[
u_3=v_3+v_3^\bot,
\]
where $v_3^\bot\in N^\bot$ is given by
\[
v_3^\bot=\mathcal L_{x_0}^{-1}(\lambda_2v^\bot_1-\mathcal P^\bot\mathcal H^{(3)}u_0-\mathcal P^\bot \mathcal H^{(2)}u_1-\mathcal P^\bot\mathcal O_1u_2),
\]
and $v_3\in N$ will be determined later. Observe that $v_3^\bot$ has the form
\[
v_3^\bot=-\mathcal L_{x_0}^{-1}\mathcal P^\bot\mathcal O_1v_2+f_3,
\]
where $f_3$ is known:
\[
f_3=\mathcal L_{x_0}^{-1}(\lambda_2v^\bot_1-\mathcal P^\bot\mathcal H^{(3)}u_0-\mathcal P^\bot\mathcal H^{(2)}u_1-\mathcal P^\bot\mathcal O_1v^\bot_2).
\]
\medskip\par
{\bf The iteration procedure.} Suppose that the coefficients of
$t^{\ell}$ equal zero for $\ell=0,\ldots,k-1$, for some $k>3$. Then we know
the coefficients $\lambda_\ell$ for $\ell=0,\ldots,k-1$. We also
know that $u_\ell$ for $\ell=0,\ldots,k-1$ can be written as
\begin{equation}\label{e:ul}
u_\ell=v_\ell+v^\bot_\ell,
\end{equation}
where $v^\bot_\ell, \ell=0,\ldots,k-1$, are some functions
in $N^\bot$ of the form
\begin{equation}\label{e:vl-bot}
v_\ell^\bot=-\mathcal L_{x_0}^{-1}\mathcal P^\bot\mathcal O_1v_{\ell-1}+f_\ell,
\end{equation}
with some known $f_\ell\in N^\bot$, and $v_\ell \in N$ are known for $\ell=0,\ldots,k-3$, $v_\ell \perp \Psi_j$.

The cancelation of the coefficient of $t^{k}$ gives:
\begin{multline}\label{e:n/2}
\mathcal L_{x_0}u_k= \lambda_ku_0-\mathcal H^{(k)}u_0 +\sum_{\ell=3}^{k-1}\lambda_\ell u_{k-\ell}-\sum_{\ell=3}^{k-1} \mathcal H^{(\ell)}u_{k-\ell}\\ +\lambda_2u_{k-2}-\mathcal H^{(2)}u_{k-2}-\mathcal O_1u_{k-1}.
\end{multline}
The orthogonality condition for \eqref{e:n/2} reads as
\begin{multline}\label{e:n/2ort}
\lambda_ku_0-\mathcal P\mathcal H^{(k)}u_0 +\sum_{\ell=3}^{k-1}\lambda_\ell v_{k-\ell}-\sum_{\ell=3}^{k-1}\mathcal P \mathcal H^{(\ell)}u_{k-\ell}\\ +\lambda_2v_{k-2}-\mathcal P \mathcal H^{(2)}u_{k-2}-\mathcal P\mathcal O_1u_{k-1}=0.
\end{multline}
Using \eqref{e:ul} and \eqref{e:vl-bot}, \eqref{e:n/2ort} can be written as
\begin{multline}\label{e:lort2}
\mathcal D_{x_0}v_{k-2}-\lambda_2v_{k-2}=\lambda_ku_0-\mathcal P\mathcal H^{(k)}u_0+\sum_{\ell=3}^{k-1}\lambda_\ell v_{k-\ell}-\sum_{\ell=3}^{k-1}\mathcal P \mathcal H^{(\ell)}u_{k-\ell}\\ -\mathcal P \mathcal H^{(2)}v^\bot_{k-2}-\mathcal P\mathcal O_1f_{k-1}.
\end{multline}
The equation \eqref{e:lort2} has a solution $v_{k-2}$ if and only if:
\[
\lambda_k=-\left\langle \left(-\mathcal P\mathcal H^{(k)}u_0-\sum_{\ell=3}^{k-1}\mathcal P \mathcal H^{(\ell)}u_{k-\ell}-\mathcal P \mathcal H^{(2)}v^\bot_{k-2}-\mathcal P\mathcal O_1f_{k-1}\right), \Psi_j\right\rangle.
\]
Under this condition, there exists a unique solution $v_{k-2}$ of \eqref{e:lort2}, orthogonal to $\Psi_j$. 

With these choices, we find a solution $u_k$ of \eqref{e:n/2} of the form
\[
u_k=v_k+v_k^\bot,
\]
where $v_k^\bot\in N^\bot$ is given by
\[
v^\bot_k=\mathcal L_{x_0}^{-1}\left(-\mathcal P^\bot\mathcal H^{(k)}u_0+\sum_{\ell=2}^{k-1}\lambda_\ell v^\bot_{k-\ell}-\sum_{\ell=2}^{k-1} \mathcal P^\bot \mathcal H^{(\ell)}u_{k-\ell}-\mathcal P^\bot\mathcal O_1v^\bot_{k-1}\right),
\]
and $v_k\in N$ will be determined later. Observe that $v_k^\bot$ has the form
\[
v_k^\bot=-\mathcal L_{x_0}^{-1}\mathcal P^\bot\mathcal O_1v_{k-1}+f_k,
\]
where $f_k$ is known:
\[
f_k=\mathcal L_{x_0}^{-1}\left(-\mathcal P^\bot\mathcal H^{(k)}u_0 +\sum_{\ell=2}^{k-1}\lambda_\ell v^\bot_{k-\ell}-\sum_{\ell=2}^{k-1} \mathcal P^\bot \mathcal H^{(\ell)}u_{k-\ell}\right).
\]
This completes the iteration step. 
\medskip
\par
Thus, we have constructed an approximate eigenfunction $u_t$ of the operator $\mathcal H_t$ admitting an asymptotic expansion in the form of a formal asymptotic series in powers of $t$
\[
u_t=\sum_{\ell=0}^\infty u_\ell t^{\ell}, \quad u_\ell\in
{\cS(\RR^{2n})},
\]
with the corresponding formal eigenvalue
\[
\lambda_t=\sum_{\ell=0}^\infty\lambda_\ell t^{\ell},
\]
For any $N\in \NN$, consider
\[
u_t^{(N)}=\sum_{\ell=0}^N u_\ell t^{\ell}, \quad
\lambda_t^{(N)}=\sum_{\ell=0}^N\lambda_\ell t^{\ell}.
\]
Then we have
\[
\mathcal H_tu_t^{(N)}-\lambda_t^{(N)}u_t^{(N)}=\mathcal O(t^{N+1}).
\]

The constructed functions $u_t^{(N)}$ have sufficient decay properties. Therefore, by using \eqref{scaling}, we obtain the desired functions $\phi^p_{jN}\in C^\infty(X)$, which satisfy \eqref{e:Hh2} or \eqref{e:Hh}, supported in $B^X(x_0,a^X)\cong B^{T_{x_0}X}(0,a^X)$ and given by 
\[
\phi^p_{jN}(Z)=\chi(Z)\kappa^{-1/2}(Z)u_{1/\sqrt{p}}^{(N)}(\sqrt{p}Z), 
\]
where $\chi\in C^\infty_c(B^{T_{x_0}X}(0,a^X))$ is a fixed cut-off function.

\section{Lower bounds}\label{s:lower}   
In this section, we prove lower bounds for the eigenvalues of the Bochner Laplaican $\Delta^{L^p}$ and complete the proof of Theorem~\ref{t:Bochner-expansion}.  We start in Section~\ref{s:agmon} with Agmon type estimates and localization results for the corresponding eigenfuctions. In Section \ref{s:reduction} we apply these results to show that, under weak assumptions, the asymptotic behavior of the low-lying eigenvalues of $\Delta^{L^p}$ is equivalent to the asymptotic behavior of the low-lying eigenvalues of $P_{\mathcal H_p}\Delta^{L^p}P_{\mathcal H_p}$, where $P_{\mathcal H_p}$ is the generalized Bergman projection of the renormalized Bochner Laplacion.  Finally, in Section \ref{s:3.3} we use the fact that the operator $P_{\mathcal H_p}\Delta^{L^p}P_{\mathcal H_p}$ belongs to the algebra of Toeplitz operators on $X$ associated with the renormalized Bochner Laplacian $\Delta_p$, which was constructed in \cite{ioos-lu-ma-ma,bergman}. The asymptotic behavior, in the semiclassical limit, of low-lying eigenvalues of a self-adjoint Toeplitz operator under assumption that its principal symbol has a non-degenerate minimum with discrete wells was studied in \cite{toeplitz}. We apply these results to get lower estimates for the eigenvalues of $P_{\mathcal H_p}\Delta_pP_{\mathcal H_p}$ and complete the proof of Theorem~\ref{t:Bochner-expansion}.

\subsection{Agmon estimates}\label{s:agmon}
First, we recall a general lower bound due to Ma and Marinescu \cite{ma-ma02}. There exist $p_0\in \NN$ and $\alpha_0\in C^\infty(X)$ such that for any $p\geq p_0$ 
\begin{equation}\label{e:mama02}
(\Delta^{L^p}u,u)\geq \int_X(p\tau(x)+\alpha_0(x)) |u(x)|^2dv_X(x), \quad u\in C^\infty(X,L^p).
\end{equation}
It follows from Weitzenb\"ock formula. 

Let $\lambda_p$ be a sequence of eigenvalues of $\Delta^{L^p}$ such that 
\begin{equation}\label{e:mupO1}
\lambda_p=p\tau_0+\mu_p,\quad \mu_p=\mathcal O(1)
\end{equation}
and $u_p$ be an associated normalized eigenfunction. 

Denote by $\phi_p(x)$ the Agmon distance of $x\in X$ to the well 
\[
U_p=\{y\in X : p(\tau(y)-\tau_0)-\mu_p +\alpha_0(y)\leq 0\} 
\] 
associated to the Agmon metric 
\[
G_p=[p(\tau(y)-\tau_0)-\mu_p+\alpha_0(y)]_+g.
\] 
Recall the standard notation $[x]_+=\max(x,0)$, $[x]_-=\max(-x,0)$ for $x\in \RR$.

\begin{prop}
For any $\varepsilon \in (0,1)$, we have
\begin{multline}\label{e:Agmon}
(1-\varepsilon^2) \int_X  [p(\tau(x)-\tau_0)-\mu_p+\alpha_0(x)]_+ e^{2\varepsilon\phi_p(x)} |u_p(x)|^2 dv_X(x) \\ \leq \int_X [p(\tau(x)-\tau_0)-\mu_p+\alpha_0(x)]_- |u_p(x)|^2 dv_X(x). 
\end{multline}
\end{prop}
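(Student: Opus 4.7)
My plan is to apply the Ma--Marinescu lower bound \eqref{e:mama02} to the twisted section $e^{\varepsilon\phi_p}u_p$ and then exploit the eikonal property of the Agmon distance $\phi_p$ to absorb the resulting gradient error, in the classical Agmon--Persson style.

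The starting point is the standard Agmon identity for the Bochner Laplacian: for any real-valued Lipschitz function $\varphi$ on $X$,
\[
(\Delta^{L^p}(e^\varphi u_p),e^\varphi u_p)-\lambda_p\|e^\varphi u_p\|^2=\int_X|d\varphi|^2\,e^{2\varphi}|u_p|^2\,dv_X.
\]
This follows by direct expansion of $\nabla^{L^p}(e^\varphi u_p)=e^\varphi\nabla^{L^p}u_p+e^\varphi(d\varphi)u_p$ together with the self-adjointness of $\Delta^{L^p}$ and the eigenvalue equation $\Delta^{L^p}u_p=\lambda_p u_p$. Setting $f_p(x):=p(\tau(x)-\tau_0)-\mu_p+\alpha_0(x)$ and combining this identity with \eqref{e:mama02} applied to $v=e^\varphi u_p$ (using $\lambda_p=p\tau_0+\mu_p$) cancels the $\lambda_p$-terms and produces
\[
\int_X f_p\,e^{2\varphi}|u_p|^2\,dv_X\;\leq\;\int_X|d\varphi|^2\,e^{2\varphi}|u_p|^2\,dv_X.
\]

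Now I take $\varphi=\varepsilon\phi_p$. By construction $\phi_p$ is the Riemannian distance to $U_p=\{f_p\leq 0\}$ in the degenerate metric $G_p=[f_p]_+g$, so it is Lipschitz, vanishes identically on $U_p$, and satisfies the eikonal inequality $|d\phi_p|_g^2\leq[f_p]_+$ almost everywhere. Decomposing $f_p=[f_p]_+-[f_p]_-$ on the left-hand side and using that $[f_p]_-$ is supported in $U_p$, where $e^{2\varepsilon\phi_p}\equiv 1$, the previous inequality becomes
\[
\int_X[f_p]_+\,e^{2\varepsilon\phi_p}|u_p|^2\,dv_X-\int_X[f_p]_-|u_p|^2\,dv_X\;\leq\;\varepsilon^2\int_X[f_p]_+\,e^{2\varepsilon\phi_p}|u_p|^2\,dv_X,
\]
which rearranges directly into \eqref{e:Agmon}.

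The main technical point I expect to have to address is that $\phi_p$ is only Lipschitz, so the Agmon identity must be used at the regularity $\varphi\in W^{1,\infty}$ rather than $C^\infty$. This is handled by a standard mollification argument: apply the identity to smooth approximants $\varphi_\delta\to\varepsilon\phi_p$ and pass to the limit $\delta\to 0$ by dominated convergence, using that the mollifications have uniformly bounded gradients and converge almost everywhere to $d(\varepsilon\phi_p)$ by Rademacher's theorem. Everything else is elementary.
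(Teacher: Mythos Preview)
Your proof is correct and follows essentially the same route as the paper: the Agmon identity for $e^{\varepsilon\phi_p}u_p$, the Ma--Marinescu lower bound \eqref{e:mama02}, the eikonal inequality for the Agmon distance, and the decomposition $f_p=[f_p]_+-[f_p]_-$ together with $\phi_p\equiv 0$ on $U_p$. Your write-up is in fact more careful on two points: you state the eikonal inequality in the correct form $|d\phi_p|_g^2\le[f_p]_+$ (the paper's displayed inequality omits the square), and you flag the Lipschitz-only regularity of $\phi_p$, which the paper passes over silently.
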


\begin{proof}
As in \cite{HM96}, we have the formula
\begin{multline*}
(\Delta^{L^p}(e^{\varepsilon \phi_p} u_p), e^{\varepsilon \phi_p} u_p)_{L^2(X,L^p)}
\\ =\lambda_p \|e^{\varepsilon\phi_p} u_p\|^2_{L^2(X,L^p)} + \varepsilon^2
\|e^{\varepsilon \phi_p} u_p d\phi_p\|^2_{L^2(X,L^p\otimes T^*X)}.
\end{multline*}
We observe that 
\[
|d\phi_p(x)|_{g^{T^*X}}\leq [p(\tau(x)-\tau_0)-\mu_p+\alpha_0(x)]_+, \quad x\in X. 
\]
By \eqref{e:mama02}, we have
\[
(\Delta^{L^p}(e^{\varepsilon\phi_p} u_p), e^{\varepsilon\phi_p} u_p)_{L^2(X,L^p)}\geq ((p\tau+\alpha_0)e^{\varepsilon\phi_p} u_p, e^{\varepsilon\phi_p} u_p)_{L^2(X,L^p)}.
\]
Using these facts, one can easily complete the proof. 
\end{proof}

Now we replace Assumption \ref{a:2} with a more general assumption of non-degenerate submanifold wells. 

\begin{ass}\label{a:3}
The minimum set 
\[
W=\{x_0\in X : \tau(x_0)=\tau_0\}
\]  
is a smooth submanifold of $X$ and there exists $a_0>0$ such that
\begin{equation}\label{e:tauW}
\tau(x)-\tau_0\geq a_0d(x,W)^2,\quad x\in X, 
\end{equation}
where $d(x,W)$ is the geodesic distance from $x$ to $W$.  
\end{ass}

For any $\varepsilon>0$, denote by $W_\varepsilon$ the $\varepsilon$-neighborhood of $W$:
\[
W_\varepsilon=\{x\in X : d(x,W)\leq \varepsilon\}.
\]

\begin{lem}\label{l:exp}
Suppose that Assumption \ref{a:3} holds. For any $\delta>0$, there exist $p_0\in \NN$, $c>0$, $C>0$ and $\alpha>0$ such that, for any $p>p_0$
\[
\int_{X\setminus W_{cp^{-1/4+\delta}}} |u_p(x)|^2 dv_X(x)\leq Ce^{-\alpha p^{2\delta}}. 
\]
\end{lem}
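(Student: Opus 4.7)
The plan is to deduce Lemma~\ref{l:exp} directly from the weighted Agmon inequality \eqref{e:Agmon}, by combining Assumption~\ref{a:3} with pointwise estimates on the complement $X\setminus W_{cp^{-1/4+\delta}}$. Fix $\varepsilon\in(0,1)$. Since $\mu_p=\mathcal{O}(1)$ and $\alpha_0$ is bounded on the compact manifold $X$, the right-hand side of \eqref{e:Agmon} is bounded by a constant $C_0$ independent of $p$. The task is to show that both factors $[p(\tau-\tau_0)-\mu_p+\alpha_0]_+$ and $e^{2\varepsilon\phi_p}$ in the left-hand integrand are genuinely large on $X\setminus W_{cp^{-1/4+\delta}}$.

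The polynomial factor is controlled directly by Assumption~\ref{a:3}: if $r:=d(x,W)\geq cp^{-1/4+\delta}$, then $p(\tau(x)-\tau_0)\geq a_0c^2 p^{1/2+2\delta}$, which dominates $|\mu_p|+\|\alpha_0\|_\infty=\mathcal{O}(1)$ for $p$ large, so
\[
[p(\tau(x)-\tau_0)-\mu_p+\alpha_0(x)]_+\;\geq\; \tfrac12 a_0c^2 p^{1/2+2\delta}.
\]

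For the exponential factor I would bound $\phi_p(x)$ from below using the geometry of the well. First, \eqref{e:tauW} applied to any $y\in U_p$ gives $pa_0d(y,W)^2\leq \mu_p-\alpha_0(y)\leq C_0$, hence $U_p\subset W_{C_1p^{-1/2}}$ with $C_1=\sqrt{C_0/a_0}$. Now let $\gamma:[0,L]\to X$ be any absolutely continuous curve from $U_p$ to $x$ parametrized by $g$-arc length. The function $\rho(s):=d(\gamma(s),W)$ is $1$-Lipschitz with $\rho(0)\leq C_1p^{-1/2}$ and $\rho(L)=r$, so for $p$ large the last time $s_\ast$ at which $\rho(s)=r/2$ satisfies $L-s_\ast\geq r/2$ and $\rho(s)\geq r/2$ on $[s_\ast,L]$. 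On this interval \eqref{e:tauW} yields $[p(\tau-\tau_0)-\mu_p+\alpha_0]_+\geq \tfrac18 pa_0r^2$, so the $G_p$-length of $\gamma$ is at least $(L-s_\ast)\sqrt{\tfrac18 pa_0r^2}\geq C_2\sqrt{p}\,r^2$. Taking the infimum over $\gamma$ gives $\phi_p(x)\geq C_2\sqrt{p}\,r^2\geq C_2 c^2 p^{2\delta}$.

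Inserting these two lower bounds into \eqref{e:Agmon} and restricting the left-hand integral to $X\setminus W_{cp^{-1/4+\delta}}$ gives
\[
\tfrac{1-\varepsilon^2}{2}\,a_0 c^2 p^{1/2+2\delta}\,e^{2\varepsilon C_2 c^2 p^{2\delta}}\int_{X\setminus W_{cp^{-1/4+\delta}}}|u_p(x)|^2\,dv_X(x)\;\leq\; C_0,
\]
which proves the lemma with $\alpha=\varepsilon C_2 c^2$ (the polynomial prefactor is absorbed into the exponential). The only delicate step is the lower bound on $\phi_p$: because the Agmon metric $G_p$ degenerates on $U_p$ and its weight depends on $p$, one cannot rely on a single fixed geodesic. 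The key observation is that $d(\cdot,W)$ is $1$-Lipschitz in the $g$-arc-length parameter, which forces every competitor curve to traverse a definite annulus around $W$ on which the weight is already of order $p^{1/2+2\delta}$; this makes the lower bound on $\phi_p$ uniform in the topology of the submanifold $W$.
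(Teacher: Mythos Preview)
Your argument is correct and follows essentially the same route as the paper's proof: bound the right-hand side of \eqref{e:Agmon} by a constant, show $U_p\subset W_{C_1p^{-1/2}}$, and obtain the lower bound $\phi_p(x)\geq C_2\sqrt{p}\,d(x,W)^2$ from the quadratic growth \eqref{e:tauW}. The paper states this last inequality by comparing $G_p$ with the auxiliary metric $G_W=d(\cdot,W)^2g$ and invoking $d_W(x)\geq C_0\,d(x,W)^2$, whereas you spell out the underlying curve argument directly via the $1$-Lipschitz function $\rho(s)=d(\gamma(s),W)$; these are the same idea, and your version is in fact more explicit about why the annular traversal forces the claimed bound.
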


\begin{proof}
Observe that there exist $C_1>0$ and $C_2>0$ such that if $d(x,W)>C_1p^{-1/2}$, then 
\begin{equation}\label{e:phi}
\phi_p(x)\geq C_2 p^{1/2}d(x,W)^2. 
\end{equation}
Indeed, by \eqref{e:tauW} and \eqref{e:mupO1}, there exists $a>0$ such that 
\[
p(\tau(x)-\tau_0)-\mu_p+\alpha_0(x)\geq a_0pd(x,W)^2-a, \quad x\in X
\]
Therefore, if $d(x,W)>C_1p^{-1/2}$ with $C_1=(2a/a_0)^{1/2}$, then 
\[
p(\tau(x)-\tau_0)-\mu_p+\alpha_0(x)\geq \frac 12a_0pd(x,W)^2.
\]
In particular, for such $x$, 
\begin{equation}\label{e:upper}
p(\tau(x)-\tau_0)-\mu_p+\alpha_0(x)\geq a.
\end{equation}
It remains to observe that, for the metric $G_W=d(x,W)^2g$, the distance function $d_W$ to $W$ satisfies the estimate
\[
d_W(x)>C_0d(x,W)^2.
\]
with some $C_0>0$. It completes the proof of \eqref{e:phi}.

Using \eqref{e:tauW}, it is easy to see that there exists $c_1>0$ such that, if $p(\tau(x)-\tau_0)-\mu_p+\alpha_0(x)<0$, then $d(x,W)<c_1p^{-1/2}$. In other words, we have an inclusion 
\begin{equation}\label{e:UpW}
U_p\subset W_{c_1p^{-1/2}}.
\end{equation} 
Without loss of generality, we can assume that $c_1>C_1$, that is, \eqref{e:phi} holds for any $x\in X\setminus W_{c_1p^{-1/2}}$. 

It is clear that there exists $a_1>0$ such that
\[
\tau(x)-\tau_0 \leq a_1d(x,W)^2,\quad x\in X.
\]
Therefore. there exists $K>0$ such that, for any $p$ and  $x\in U_p$,
\[
|p(\tau(x)-\tau_0)-\mu_p+\alpha_0(x)|<K.
\]
Using this estimate, for the right hand side of \eqref{e:Agmon}, we get
\begin{multline}\label{e:right}
\int_X [p(\tau(x)-\tau_0)-\mu_p+\alpha_0(x)]_- |u_p(x)|^2 dv_X(x) \\ \leq K \int_{U_p} |u_p(x)|^2 dv_X(x)\leq K. 
\end{multline}
For the left hand side of \eqref{e:Agmon}, using \eqref{e:phi}, \eqref{e:upper} and \eqref{e:UpW} and setting $\varepsilon=1/2$, we get
\begin{multline}\label{e:left}
\int_X  [p(\tau(x)-\tau_0)-\mu_p+\alpha_0(x)]_+ e^{2\varepsilon\phi_p(x)} |u_p(x)|^2 dv_X(x) \\ \geq a\int_{X\setminus W_{c_1p^{-1/2}}} e^{C_2 p^{1/2} d(x,W)^2} |u_p(x)|^2 dv_X(x). 
\end{multline}
Combining \eqref{e:right} and \eqref{e:left}, we infer that 
\begin{equation}\label{e:Ka}
\int_{X\setminus W_{c_1p^{-1/2}}} e^{C_2 p^{1/2}d(x,W)^2} |u_p(x)|^2 dv_X(x)\leq \frac{K}{a}.
\end{equation}
Take any $\delta>0$. Then, if $d(x,W)>cp^{-1/4+\delta}$, then $p^{1/2} d(x,W)^2>cp^{2\delta}$ and there exists $p_0\in \NN$ such that $d(x,W)>c_1p^{-1/2}$ for any $p>p_0$. Therefore, for any $p>p_0$, we get
\begin{multline*}
\int_{X\setminus W_{cp^{-1/4+\delta}}} e^{C_2cp^{2\delta}} |u_p(x)|^2 dv_X(x)\\ <\int_{X\setminus W_{c_1p^{-1/2}}} e^{C_2 p^{1/2} d(x,W)^2} |u_p(x)|^2 dv_X(x).
\end{multline*}
Combining with \eqref{e:Ka}, this immediately completes the proof.
\end{proof}

\subsection{Reduction to the lowest Landau level}\label{s:reduction}
The renormalized Bochner Laplacian $\Delta_p$ is a second order differential opertor acting on $C^\infty(X,L^p)$ by
 \begin{equation}\label{e:defDp}
\Delta_p=\Delta^{L^p}-p\tau.
\end{equation}
This operator was introduced by V. Guillemin and A. Uribe in \cite{Gu-Uribe}. When $(X,\omega)$ is a Kaehler manifold, it is twice the corresponding Kodaira Laplacian on functions $\Box^{L^p}=\bar\partial^{L^p*}\bar\partial^{L^p}$.
 
Denote by $\sigma(\Delta_p)$ the spectrum of $\Delta_p$ in $L^2(X,L^p)$. Put
  \begin{equation}\label{e:def-mu0}
 \mu_0=\inf_{u\in T_xX, x\in X}\frac{iR^L_x(u,J(x)u)}{|u|_g^2}.
 \end{equation}
By \cite[Cor. 1.2]{ma-ma08}, there exists a constant $C_L>0$ such that for any $p$
   \begin{equation}\label{e:gap}
 \sigma(\Delta_p)\subset [-C_L,C_L]\cap [2p\mu_0-C_L,+\infty).
  \end{equation}
Consider the finite-dimensional vector subspace $\mathcal H_p\subset L^2(X,L^p)$ spanned by the eigensections of $\Delta_p$ corresponding to eigenvalues in $[-C_L,C_L]$.  Let $P_{\mathcal H_p}$ be the orthogonal projection from $L^2(X,L^p)$ onto $\mathcal H_p$ (the generalized Bergman projection of $\Delta_p$). 

\begin{prop}\label{p::lower-lambda} 
Suppose that Assumption \ref{a:3} holds. Let $\lambda_p$ be a sequence of eigenvalues of $\Delta^{L^p}$ such that 
\[
\lambda_p=p\tau_0+\mu_p,\quad \mu_p=\mathcal O(1)
\] 
and let $u_p$ be an associated normalized eigenfunction. Then, for any $\delta\in (0,1/2)$, 
\begin{equation}\label{e:lower-lambda}
(P_{\cH_p}\Delta^{L^p} P_{\cH_p}u_p,u_p)\leq \lambda_p+\mathcal O(p^{-1/2+\delta}).
\end{equation}
\end{prop}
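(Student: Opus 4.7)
The plan is to decompose the eigenfunction orthogonally as $u_p = v_p + w_p$ with $v_p = P_{\cH_p} u_p$ and $w_p = (I-P_{\cH_p}) u_p$, and to derive the exact identity
\[
(P_{\cH_p}\Delta^{L^p} P_{\cH_p}u_p,u_p) - \lambda_p = -2\lambda_p\|w_p\|^2 + (\Delta^{L^p}w_p,w_p).
\]
To obtain it, I expand $\lambda_p = (\Delta^{L^p}(v_p+w_p),v_p+w_p)$; since $\Delta_p = \Delta^{L^p}-p\tau$ commutes with $P_{\cH_p}$ one has $(\Delta_p v_p,w_p)=0$, so the cross term is $2\Re(\Delta^{L^p}v_p,w_p) = 2p\Re((\tau-\tau_0)v_p,w_p)$ (using $(v_p,w_p)=0$). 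Using self-adjointness and the eigenvalue equation in the form $\Delta^{L^p} w_p = \lambda_p u_p - \Delta^{L^p} v_p$, this cross term also equals $2[\lambda_p\|v_p\|^2 - (\Delta^{L^p}v_p,v_p)]$, which is real; solving the resulting equation for $(\Delta^{L^p}v_p,v_p)$ and using $\|v_p\|^2+\|w_p\|^2=1$ yields the identity. Since $\lambda_p>0$ for large $p$, the term $-2\lambda_p\|w_p\|^2$ is non-positive, so \eqref{e:lower-lambda} reduces to showing $(\Delta^{L^p}w_p,w_p) = \mathcal O(p^{-1/2+\delta})$.

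The crux is a sharp bound $\|w_p\| = \mathcal O(p^{-3/4})$ extracted from \eqref{e:Agmon}. Set $f_p := \Delta_p u_p = (\mu_p - p(\tau-\tau_0))u_p$. Since $\Delta_p$ commutes with $P_{\cH_p}$, $\Delta_p w_p = (I-P_{\cH_p}) f_p$, and the spectral gap \eqref{e:gap} gives $\|w_p\| \leq (2p\mu_0 - C_L)^{-1}\|f_p\| \leq Cp^{-1}\|f_p\|$. By \eqref{e:tauW} there exists $a_1>0$ with $\|f_p\|^2 \leq 2|\mu_p|^2 + 2a_1^2 \int_X (pd(x,W)^2)^2 |u_p|^2\, dv_X$. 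To bound the last integral, I use the lower bound $\phi_p(x) \geq C_2 p^{1/2} d(x,W)^2$ established in the proof of Lemma~\ref{l:exp} and factor
\[
(pd(x,W)^2)^2|u_p|^2 = \bigl(pd(x,W)^2\, e^{-2\varepsilon C_2 p^{1/2} d(x,W)^2}\bigr) \cdot \bigl(pd(x,W)^2\, e^{2\varepsilon C_2 p^{1/2} d(x,W)^2}|u_p|^2\bigr);
\]
the first factor is $\mathcal O(p^{1/2})$ uniformly in $x$ (since $s\mapsto se^{-\alpha s}$ is bounded), while the integral of the second factor is $\mathcal O(1)$ by \eqref{e:Agmon}. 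Hence $\|f_p\| = \mathcal O(p^{1/4})$ and $\|w_p\| = \mathcal O(p^{-3/4})$.

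To conclude, decompose $(\Delta^{L^p}w_p,w_p) = (\Delta_p w_p,w_p) + p(\tau w_p,w_p)$. The first term equals $((I-P_{\cH_p})f_p,w_p) = (f_p,w_p) \leq \|f_p\|\|w_p\| = \mathcal O(p^{-1/2})$, and the second is $\leq p\|\tau\|_\infty\|w_p\|^2 = \mathcal O(p^{-1/2})$. This yields $(\Delta^{L^p}w_p,w_p) = \mathcal O(p^{-1/2}) \leq \mathcal O(p^{-1/2+\delta})$, which combined with the identity above proves \eqref{e:lower-lambda}. The main technical obstacle is precisely the quartic moment estimate on $\int p^2 d(x,W)^4 |u_p|^2$: the coarser support-localization Lemma~\ref{l:exp} alone would yield only $\|f_p\| = \mathcal O(p^{1/2+2\delta})$ and $\|w_p\|^2 = \mathcal O(p^{-1+4\delta})$, producing a final error $\mathcal O(p^{4\delta})$ that is too weak by a factor $p^{1/2}$, so the exponential weight in \eqref{e:Agmon} is genuinely essential.
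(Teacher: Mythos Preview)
Your argument is correct and in fact yields the sharper bound $\mathcal O(p^{-1/2})$ rather than the paper's $\mathcal O(p^{-1/2+\delta})$, but the route is genuinely different from the one taken in the paper. The paper uses the IMS-type identity
\[
\lambda_p=(P_{\cH_p}\Delta^{L^p} P_{\cH_p}u_p,u_p)+((I-P_{\cH_p})\Delta^{L^p} (I-P_{\cH_p})u_p,u_p)+p([\tau,[\tau,P_{\cH_p}]]u_p,u_p),
\]
and the main work goes into Lemma~\ref{l:ddcom}: the double commutator term is rewritten as a double integral against the generalized Bergman kernel $P_p(x,y)$, and is then controlled using the off-diagonal exponential decay of $P_p$ together with the support localization of $u_p$ from Lemma~\ref{l:exp}. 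This gives only $\|(I-P_{\cH_p})u_p\|=\mathcal O(p^{-1/2})$ and an overall error $\mathcal O(p^{-1/2+\delta})$.

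Your approach replaces the IMS decomposition by the exact algebraic identity $(P_{\cH_p}\Delta^{L^p} P_{\cH_p}u_p,u_p)-\lambda_p=-2\lambda_p\|w_p\|^2+(\Delta^{L^p}w_p,w_p)$, so that no commutator term appears and no pointwise information on the Bergman kernel is needed. The essential input is instead a quartic moment bound $\int_X p^2 d(x,W)^4|u_p|^2\,dv_X=\mathcal O(p^{1/2})$, which you extract directly from the weighted Agmon estimate \eqref{e:Agmon} (via the factor $[p(\tau-\tau_0)-\mu_p+\alpha_0]_+\geq c\,p\,d(x,W)^2$ and the lower bound $\phi_p\geq C_2 p^{1/2}d(x,W)^2$ on the exterior region). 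This yields the stronger estimate $\|w_p\|=\mathcal O(p^{-3/4})$, which is exactly what is needed so that both $(\Delta_p w_p,w_p)$ and $p(\tau w_p,w_p)$ are $\mathcal O(p^{-1/2})$. One small point you leave implicit: the inequality $\phi_p(x)\geq C_2 p^{1/2}d(x,W)^2$ is only proved for $d(x,W)>C_1p^{-1/2}$, but on the complementary region $p\,d(x,W)^2\leq C_1^2$, so its contribution to the quartic moment is $\mathcal O(1)$ and is harmless. In short, the paper trades an extra Bergman-kernel input (the off-diagonal decay estimate) for a softer localization, while your argument squeezes more out of the Agmon inequality and is both more self-contained and quantitatively sharper.
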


\begin{proof}
By the IMS localization formula, we have
\begin{multline}\label{e:IMS}
\lambda_p=(\Delta^{L^p}u_p,u_p)=(P_{\cH_p}\Delta^{L^p} P_{\cH_p}u_p,u_p)\\ +((I-P_{\cH_p})\Delta^{L^p} (I-P_{\cH_p})u_p,u_p) +p([\tau, [\tau, P_{\cH_p}]]u_p,u_p).
\end{multline}

Let us estimate the last term in the right-hand side of \eqref{e:IMS}.

\begin{lem}\label{l:ddcom}
For any $\delta\in (0,1/2)$, there exists $C_1>0$ and $p_1\in \NN$ such that, for all $p>p_1$,
\[
|([\tau, [\tau, P_{\cH_p}]]u_p,u_p)|\leq C_1p^{-3/2+2\delta}.   
\]
\end{lem}
 
\begin{proof}
It is easy to see that
\[
([\tau, [\tau, P_{\cH_p}]]u_p, u_p) =\int_X\int_X (\tau(x)-\tau(y))^2P_p(x,y)u_p(x)\overline{u_p(y)}dv_X(x)dv_X(y),
\]
where $P_p$ is the smooth kernel of the operator $P_{\cH_p}$ (the generalized Bergman kernel of $\Delta_p$). 

By Lemma \ref{l:exp}, we have, for any $\delta>0$, 
\begin{multline*}
([\tau, [\tau, P_{\cH_p}]]u_p, u_p) \\ =\int_{W_{cp^{-1/4+\delta}}}\int_{W_{cp^{-1/4+\delta}}} (\tau(x)-\tau(y))^2P_p(x,y)u_p(x)\overline{u_p(y)}dv_X(x)dv_X(y)\\ 
+\mathcal O(e^{-\alpha p^{2\delta}}).
\end{multline*}
By Schur's test, we conclude
\begin{multline*}
([\tau, [\tau, P_{\cH_p}]]u_p, u_p) \\ \leq \sup_{x\in W_{cp^{-1/4+\delta}}}\int_{W_{cp^{-1/4+\delta}}} (\tau(x)-\tau(y))^2P_p(x,y)dv_X(y)
+\mathcal O(e^{-\alpha p^{2\delta}}),
\end{multline*}
so it remains to estimate the integral 
\[
I=\int_{W_{cp^{-1/4+\delta}}} (\tau(x)-\tau(y))^2P_p(x,y)dv_X(y)
\]
uniformly on $x\in W_{cp^{-1/4+\delta}}$.

By \cite{Ko-ma-ma}, there exist $C>0$ and $\beta>0$ such that, for any $p\geq 1$, we have 
\[
|P_{p}(x,y)|\leq Cp^{n}\exp(-\beta\sqrt{p}d(x,y)), \quad x,y\in X. 
\]
Since $d\tau_{x_0}=0$ for any $x_0\in W$, for any $x\in W_{cp^{-1/4+\delta}}$ and $y\in W_{cp^{-1/4+\delta}}$, we have an estimate
\[
|\tau(x)-\tau(y)|\leq \sup_{z\in W_{cp^{-1/4+\delta}}} |d\tau_z|\, d(x,y) \leq  cp^{-1/4+\delta}d(x,y). 
\]
Thus, we get 
\begin{multline*}
I\leq C_2p^{-1/2+2\delta}\sup_{x\in W_{cp^{-1/4+\delta}}}\int_X p^n d(x,y)^2 \exp(-\beta\sqrt{p}d(x,y))\,dv_X(y)\\ \leq C_3p^{-3/2+2\delta},
\end{multline*}
that completes the proof of the lemma. 
 \end{proof}

Fix $\delta\in (0,1/2)$. By \eqref{e:IMS} and Lemma~\ref{l:ddcom}, we have 
\[
(\Delta^{L^p} P_{\cH_p}u_p,P_{\cH_p}u_p)+ (\Delta^{L^p}(I-P_{\cH_p})u_p, (I-P_{\cH_p})u_p)\leq \lambda_p+\mathcal O(p^{-1/2+\delta})
\]
By \eqref{e:defDp}, for the left-hand side of the last estimate, we have
\begin{multline*}
(\Delta^{L^p} P_{\cH_p}u_p,P_{\cH_p}u_p)+ (\Delta^{L^p}(I-P_{\cH_p})u_p, (I-P_{\cH_p})u_p) \\ 
\begin{aligned} 
= &(\Delta_p P_{\cH_p}u_p,P_{\cH_p}u_p)+ (\Delta_p(I-P_{\cH_p})u_p, (I-P_{\cH_p})u_p)\\ & +p(\tau P_{\cH_p}u_p,P_{\cH_p}u_p)+ p(\tau (I-P_{\cH_p})u_p, (I-P_{\cH_p})u_p)\\ \geq &(\Delta_p P_{\cH_p}u_p,P_{\cH_p}u_p)+ (\Delta_p(I-P_{\cH_p})u_p, (I-P_{\cH_p})u_p) +p\tau_0.
\end{aligned}
\end{multline*}
Therefore, it gives 
\begin{multline}\label{e:D1}
(\Delta_p P_{\cH_p}u_p,P_{\cH_p}u_p)+ (\Delta_p(I-P_{\cH_p})u_p, (I-P_{\cH_p})u_p) \\ \leq \mu_p+\mathcal O(p^{-1/2+\delta})=\mathcal O(1).
\end{multline}
By \eqref{e:gap} and the definition of $P_{\cH_p}$, we get estimates
\begin{equation}\label{e:D2a}
(\Delta_pP_{\cH_p}u_p, P_{\cH_p}u_p) \geq -C_L\|P_{\cH_p}u_p\|^2\geq -C_L
\end{equation}
and  
\begin{equation}\label{e:D2}
(\Delta_p(I-P_{\cH_p})u_p, (I-P_{\cH_p})u_p) \geq (2p\mu_0-C_L)\|(I-P_{\cH_p})u_p\|^2.
\end{equation}
By \eqref{e:D1} and \eqref{e:D2a}, it follows that
\begin{equation}\label{e:D1a}
(\Delta_p(I-P_{\cH_p})u_p, (I-P_{\cH_p})u_p) \leq \mu_p+\mathcal O(p^{-1/2+\delta})=\mathcal O(1)
\end{equation}
From \eqref{e:D1a} and \eqref{e:D2}, we infer that
\[
\|(I-P_{\cH_p})u_p\|=\mathcal O(p^{-1/2})\ \text{and}\ 
u_p=P_{\cH_p}u_p+\mathcal O(p^{-1/2}). 
\]
Using these facts, by \eqref{e:IMS} and Lemma~\ref{l:ddcom}, we get \eqref{e:lower-lambda}.
\end{proof}

\begin{prop}\label{t:Bochner-estimates2}
Under Assumption \ref{a:3}, if for any $j\in \NN$ we have
\begin{equation}\label{e:lj-rough}
\lambda_j(\Delta^{L^p})=p\tau_0+\mathcal O(1),
\end{equation}
then, for any $j\in \NN$ and $\delta\in (0,1/2)$, there exists $C>0$ such that 
\[
\lambda_j(P_{\mathcal H_p}\Delta^{L^p}P_{\mathcal H_p})-Cp^{-1/2+\delta} \leq \lambda_j(\Delta^{L^p})\leq \lambda_j(P_{\mathcal H_p}\Delta^{L^p}P_{\mathcal H_p}), \quad p\to\infty.
\]
\end{prop}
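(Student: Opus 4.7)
The plan is to derive both inequalities from the Courant--Fischer min-max principle, with Proposition~\ref{p::lower-lambda} providing the key quantitative input for the lower bound.

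The upper bound $\lambda_j(\Delta^{L^p})\le \lambda_j(P_{\cH_p}\Delta^{L^p}P_{\cH_p})$ is essentially automatic. I would take a $j$-dimensional subspace $V_p\subset \cH_p$ on which $P_{\cH_p}\Delta^{L^p}P_{\cH_p}|_{\cH_p}$ attains its first $j$ eigenvalues. For every $v\in V_p$ one has $P_{\cH_p}v=v$, so $(\Delta^{L^p}v,v)=(P_{\cH_p}\Delta^{L^p}P_{\cH_p}v,v)\le \lambda_j(P_{\cH_p}\Delta^{L^p}P_{\cH_p})\|v\|^2$, and min-max for $\Delta^{L^p}$ closes the argument.

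For the lower bound I would re-run the IMS argument of Proposition~\ref{p::lower-lambda} on the $j$-dimensional span $V:=\operatorname{span}(u_1^p,\dots,u_j^p)$, where $u_1^p,\dots,u_j^p$ are orthonormal eigenfunctions of $\Delta^{L^p}$ with eigenvalues $\lambda_i(\Delta^{L^p})=p\tau_0+\mathcal O(1)$. By the hypothesis \eqref{e:lj-rough}, Lemma~\ref{l:exp} applies to each $u_i^p$; via the pointwise bound $|\sum c_iu_i^p|^2\le j\sum|c_i|^2|u_i^p|^2$ the same Agmon localization holds for every unit vector $u=\sum c_iu_i^p\in V$, with a constant depending only on $j$. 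The proof of Lemma~\ref{l:ddcom} is in essence Schur's test on the truncated kernel $(\tau(x)-\tau(y))^2P_p(x,y)\chi_p(x)\chi_p(y)$ with $\chi_p$ supported in $W_{cp^{-1/4+\delta}}$, so it delivers a uniform operator-norm bound $|p([\tau,[\tau,P_{\cH_p}]]u,u)|\le Cp^{-1/2+2\delta}$ for $u\in V$ with $\|u\|=1$. Inserting this into the IMS identity, together with $(\Delta^{L^p}u,u)=\sum|c_i|^2\lambda_i(\Delta^{L^p})\le \lambda_j(\Delta^{L^p})$ and the spectral gap \eqref{e:gap}, reproduces (as in Proposition~\ref{p::lower-lambda}) both $\|(I-P_{\cH_p})u\|=\mathcal O(p^{-1/2})$ and
\[
(\Delta^{L^p}v,v)-p\tau_0\|v\|^2\le \bigl(\lambda_j(\Delta^{L^p})-p\tau_0\bigr)+\mathcal O(p^{-1/2+2\delta}),\qquad v:=P_{\cH_p}u.
\]
Since $\lambda_j(\Delta^{L^p})-p\tau_0=\mathcal O(1)$ and $\|v\|^2=1+\mathcal O(p^{-1})$, dividing by $\|v\|^2$ after isolating $p\tau_0$ gives $(\Delta^{L^p}v,v)/\|v\|^2\le \lambda_j(\Delta^{L^p})+\mathcal O(p^{-1/2+2\delta})$. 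The map $u\mapsto P_{\cH_p}u$ is injective on $V$ for $p$ large (because $\|u-P_{\cH_p}u\|=\mathcal O(p^{-1/2})$), so its image $W_p\subset\cH_p$ is $j$-dimensional, and min-max for $P_{\cH_p}\Delta^{L^p}P_{\cH_p}|_{\cH_p}$ applied to $W_p$ yields $\lambda_j(P_{\cH_p}\Delta^{L^p}P_{\cH_p})\le \lambda_j(\Delta^{L^p})+\mathcal O(p^{-1/2+2\delta})$. Renaming $\delta\to\delta/2$ produces the stated slack $Cp^{-1/2+\delta}$.

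The chief obstacle is the upgrade of Lemma~\ref{l:ddcom} from a single eigenfunction to an arbitrary unit vector of $V$. The key observation is that Schur's test in the proof already delivers a uniform operator-norm bound on the truncated double-commutator kernel, and the Agmon localization of Lemma~\ref{l:exp} transfers to finite linear combinations with only a combinatorial factor of $j$. A small but essential bookkeeping point is that one must subtract $p\tau_0\|v\|^2$ before normalising by $\|v\|^2$, since the $\mathcal O(p)$ contribution $p\tau_0$ to $\lambda_j(\Delta^{L^p})$ would otherwise swamp the desired $\mathcal O(p^{-1/2+\delta})$ estimate.
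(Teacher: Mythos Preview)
Your argument is correct and follows the same route the paper sketches: Rayleigh--Ritz for the upper bound, and Proposition~\ref{p::lower-lambda} together with the min-max principle for the lower bound. The paper's proof is extremely terse (two sentences), whereas you have spelled out the passage from a single eigenfunction to the $j$-dimensional eigenspace, including the Agmon localization on linear combinations, the injectivity of $P_{\cH_p}$ on $V$, and the crucial bookkeeping step of subtracting $p\tau_0\|v\|^2$ before normalising (which is precisely what prevents the $\mathcal O(p)$ leading term from swamping the error); these details are implicit in the paper's invocation of ``Proposition~\ref{p::lower-lambda} and the mini-max principle'' but not written out there.
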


\begin{proof}
The first inequality is an immediate consequence of Proposition~\ref{p::lower-lambda} applied to the sequence $\lambda_p=\lambda_j(\Delta^{L^p})$ and the mini-max principle. The second inequality is well-known (see, for instance, \cite[Theorem XIII.3]{RSIV}), the Rayleigh-Ritz technique).  
\end{proof}

The fact that the eigenvalues $\lambda_j(\Delta^{L^p})$ satisfy the condition \eqref{e:lj-rough} under Assumption~\ref{a:2} is an easy consequence of Theorem~\ref{t:qmodes} (see Remark~\ref{r:upper-bound}). Actually, one can show that they satisfy the condition \eqref{e:lj-rough} under Assumption~\ref{a:3}. The proof will be given in a forthcoming paper. 

\subsection{The case of discrete wells}\label{s:3.3}
Now we return to the case of discrete wells and complete the proof of Theorem~\ref{t:Bochner-expansion}. So we suppose that Assumption~\ref{a:2} holds. Proposition~\ref{t:Bochner-estimates2} reduces our considerations to the operator $P_{\mathcal H_p}\Delta^{L^p}P_{\mathcal H_p}$. 

Now we will use the algebra of Toeplitz operators on $X$ associated with the renormalized Bochner Laplacian $\Delta_p$, which was constructed in \cite{ioos-lu-ma-ma,bergman}.
By definition, a Toeplitz operator is a sequence $\{T_p\}=\{T_p\}_{p\in \mathbb N}$ of bounded linear operators $T_p : L^2(X,L^p)\to L^2(X,L^p)$, such that $T_p=P_{\mathcal H_p}T_pP_{\mathcal H_p}$ for any $p\in \mathbb N$ and there exists a sequence $g_l\in C^\infty(X)$ such that 
\[
T_p=P_{\mathcal H_p}\left(\sum_{l=0}^\infty p^{-l}g_l\right)P_{\mathcal H_p}+\mathcal O(p^{-\infty}),
\]
which means that, for any natural $k$, there exists $C_k>0$ such that, for any $p\in \mathbb N$, 
\[
\left\|T_p-P_{\mathcal H_p}\left(\sum_{l=0}^k p^{-l}g_l\right)P_{\mathcal H_p}\right\|\leq C_kp^{-k-1}.
\]

One can write
\begin{equation}\label{e:PDP}
p^{-1}P_{\mathcal H_p}\Delta^{L^p}P_{\mathcal H_p}=p^{-1} P_{\mathcal H_p}\Delta_pP_{\mathcal H_p}+P_{\mathcal H_p}(\tau-\tau_0) P_{\mathcal H_p}+\tau_0 P_{\mathcal H_p}.
\end{equation}
The crucial fact is that the operator $P_{\mathcal H_p}\Delta_pP_{\mathcal H_p}=\Delta_pP_{\mathcal H_p}$ is a Toeplitz operator. This was proved in \cite{bergman}, extending previous results of \cite{ma-ma08}. Therefore, the operator $p^{-1}P_{\mathcal H_p}\Delta^{L^p}P_{\mathcal H_p}$ is a Toeplitz operator in the above sense.

In \cite{toeplitz}, the author studied the asymptotic behavior, in the semiclassical limit, of low-lying eigenvalues of a self-adjoint Toeplitz operator under assumption that its principal symbol has a non-degenerate minimum with discrete wells. We can apply these results to get lower estimates for $\lambda_j(P_{\mathcal H_p}\Delta_pP_{\mathcal H_p})$. Let us briefly recall 
the relevant results from \cite{toeplitz}.

Suppose that $T_p$ is a self-adjoint Toeplitz operator:
\begin{equation}\label{e:Tii}
T_p=P_{\mathcal H_p}\left(\sum_{l=0}^\infty p^{-l}g_l\right)P_{\mathcal H_p}+\mathcal O(p^{-\infty}),  
\end{equation}
with principal symbol $g_0=h$. Assume that the principal symbol $h$ satisfies the condition:
\begin{equation}\label{e:minh=0}
\min_{x\in X}h(x)=0.
\end{equation}

For each non-degenerate minimum $x_0$ of $h$:
\[
h(x_0)=0, \quad {\rm Hess}\,h(x_0)>0,
\]
one can define the model operator for $\{T_{p}\}$ at $x_0$ in the following way. 

Recall that $\mathcal L_{x_0}$ is a second order differential operator  in $C^\infty(T_{x_0}X)$ given by \eqref{e:defL0} and $\mathcal P_{x_0}$ is the orthogonal projection in $L^2(T_{x_0}X)$ to the kernel of $\mathcal L_{x_0}$. The model operator for $\{T_{p}\}$ at $x_0$ is the Toeplitz operator $\mathcal T_{x_0}$ in $L^2(T_{x_0}X)$ given by
\begin{equation}\label{e:model-T}
\mathcal T_{x_0}=\mathcal P_{x_0}(q_{x_0}(Z)+g_1(x_0))\mathcal P_{x_0},
\end{equation}
where  
\[
q_{x_0}(Z)=\left(\frac12 {\rm Hess}\,h(x_0)Z,Z\right)
\]
is a positive quadratic form on $T_{x_0}X\cong \mathbb R^{2n}$ and $g_1$ is the second coefficient in the asymptotic expansion \eqref{e:Tii}. 

Now suppose that a self-adjoint Toeplitz operator $T_p$ with the principal symbol $h$, satisfying \eqref{e:minh=0}, such that each minimum is non-degenerate. Then the set $U_0=h^{-1}(0)$ is a finite set of points:
\[
U_0=\{x_1,\ldots, x_N\}.
\]
Let $\mathcal T$ be the self-adjoint operator on $L^2(T_{x_1}X)\oplus \ldots \oplus  L^2(T_{x_N}X)$ defined by 
\[
\mathcal T=\mathcal T_{x_1}\oplus \ldots \oplus \mathcal T_{x_N}.
\]

Let $\{\lambda^m_p\}$ be the increasing sequence of the eigenvalues of $T_{p}$ on $\mathcal H_p$ (counted with multiplicities) and $\{\mu_m\}$  
 the increasing sequence of the eigenvalues of $\mathcal T$ (counted with multiplicities). 
By Theorem 1.5 of \cite{toeplitz}, for any fixed $m$, $\lambda^m_p$ has an asymptotic expansion, when $p\to \infty$, of the form
\begin{equation}\label{e:DelB}
\lambda^m_p=p^{-1}\mu_m+p^{-3/2}\phi_m+\mathcal O(p^{-2})
\end{equation}
with some $\phi_m\in \mathbb R$. 

Now we apply this result to the Toeplitz operator 
\[
D_p=p^{-1} P_{\mathcal H_p}\Delta_pP_{\mathcal H_p}+P_{\mathcal H_p}(\tau-\tau_0) P_{\mathcal H_p}
\]
under Assumptions 1 and 2. Its principal symbol $h(x)=\tau(x)-\tau_0$ satisfies \eqref{e:minh=0} and each minimum is non-degenerate, $U_0=W$. One can easily find the first two terms of the asymptotic expansion \eqref{e:Tii} for the operator $D_p$:
\[
D_p=P_{\mathcal H_p}\left(\tau(x)-\tau_0+p^{-1}d_1\right)P_{\mathcal H_p}+\mathcal O(p^{-2}),
\]
where $d_1$ is the principal symbol of the operator $P_{\mathcal H_p}\Delta_pP_{\mathcal H_p}$. 

To find $d_1$, we use a near-diagonal asymptotic expansion of the smooth kernel $P_{1,p}(x,x^\prime)$, $x,x^\prime\in X$ of the operator $\Delta_pP_{\mathcal H_p}$ with respect to the Riemannian volume form $dv_X$ \cite{ma-ma08,bergman}. In normal coordinates near an arbitrary point $x_0\in X$ introduced in Section \ref{s:rescaling}, the kernel $P_{1,p}(x,x^\prime)$ induces a smooth function $P_{1,p,x_0}(Z,Z^\prime)$ on the set of all $Z,Z^\prime\in T_{x_0}X$ with $x_0\in X$ and $|Z|, |Z^\prime|<a_X$.  
By \cite[Theorem 1.1]{bergman}, for any $k\in \mathbb N$ and $x_0\in X$,   we have  
\begin{multline}\label{e:Pqr}
p^{-n}P_{1,p,x_0}(Z,Z^\prime)\cong 
\sum_{r=2}^kF_{1,r,x_0}(\sqrt{p} Z, \sqrt{p}Z^\prime)\kappa^{-\frac 12}(Z)\kappa^{-\frac 12}(Z^\prime)p^{-\frac{r}{2}+1},
\end{multline}
This means that there exist $\varepsilon\in (0,a_X]$ and $C_0>0$ with the following property:
for any $l\in \mathbb N$, there exist $C>0$ and $M>0$ such that for any $x_0\in X$, $p\geq 1$ and $Z,Z^\prime\in T_{x_0}X$, $|Z|, |Z^\prime|<\varepsilon$, we have 
\begin{multline*}
\Bigg|p^{-n}P_{1,p,x_0}(Z,Z^\prime)\kappa_{x_0}^{\frac 12}(Z)\kappa_{x_0}^{\frac 12}(Z^\prime) -\sum_{r=2}^kF_{1,r,x_0}(\sqrt{p} Z, \sqrt{p}Z^\prime)p^{-\frac{r}{2}+1}\Bigg|_{\mathcal C^{l}(X)}\\ 
\leq Cp^{-\frac{k+1}{2}}(1+\sqrt{p}|Z|+\sqrt{p}|Z^\prime|)^M\exp(-\sqrt{C_0p}|Z-Z^\prime|)+\mathcal O(p^{-\infty}).
\end{multline*}

By \cite[Theorem 1.18]{ma-ma08}, the function $F_{1,2,x_0}(Z,Z^\prime)$ coincides with the Schwartz kernel of the operator $F_{1,2}$ defined by \eqref{e:F12}. Moreover, it has the form 
\begin{equation}\label{e:Fq2}
F_{1,2,x_0}(Z,Z^\prime)=J_{1,2,x_0}(Z,Z^\prime)\mathcal P_{x_0}(Z,Z^\prime),
\end{equation}
where $J_{1,2,x_0}(Z,Z^\prime)$ is an even polynomial in $Z, Z^\prime$. Finally, as shown in \cite[Theorem 6.5]{bergman} and \cite[Proposition 4.3]{ioos-lu-ma-ma}, for all $x_0\in X$ and $Z,Z^\prime\in T_{x_0}X$,
\begin{equation}\label{e:J12}
J_{1,2,x_0}(Z,Z^\prime)=J_{1,2,x_0}(0,0)=:J_{1,2}(x_0),
\end{equation}
and the principal symbol $d_1$ of $\Delta_pP_{\mathcal H_p}$ is given by 
\begin{equation}\label{e:prinTp}
d_1=J_{1,2}.
\end{equation}

Thus, we see that the model operator for the Toeplitz operator $D_{p}$ at $x_0$ given by \eqref{e:model-T} coincides with the operator $\mathcal D_{x_0}$ given by \eqref{e:model-D} with $J_{1,2}$ defined by \eqref{e:J12}.

Applying Theorem 1.5 of \cite{toeplitz} (cf. \eqref{e:DelB}) to the operator $D_p$, we infer that
\[
\lambda_j(P_{\cH_p}(\Delta_p+p(\tau-\tau_0))P_{\cH_p})=\mu_j+\mathcal O(p^{-1/2}),
\]
where $\mu_j$ are the eigenvalues of the operator $\cD$ given by \eqref{e:def-Tau}.  By \eqref{e:PDP}, it follows that
\[
\lambda_j(P_{\mathcal H_p}\Delta^{L^p}P_{\mathcal H_p}) =p\tau_0+\mu_j+\mathcal O(p^{-1/2}).
\]
By Proposition \ref{t:Bochner-estimates2},  for any $j\in \NN$ and $\delta\in (0,1/2)$, there exists $C>0$ such that 
\begin{equation}\label{e:lowerlj}
\lambda_j(\Delta^{L^p})\geq p\tau_0+\mu_j-Cp^{-1/2+\delta}.
\end{equation}
Theorem~\ref{t:Bochner-expansion} follows from this estimate and Theorem \ref{t:qmodes} with a standard argument. Let us briefly recall it. 

Fix $j\in \NN$ and $\delta\in (0,1/2)$. By \eqref{e:lowerlj} and Theorem~\ref{t:qmodes}, there exist $C_{j1}>0$ and $p_{j1}\in \NN$ such that, for any $p>p_{j1}$,
\[
I_j\; \bigcap \; \sigma(\Delta^{L^p}) =\{\lambda_j(\Delta^{L^p})\},
\]
where
\[
I_j=\left(p\tau_0+\mu_j-C_{j1}p^{-1/2+\delta},  p\tau_0+\mu_j+C_{j1}p^{-1/2}\right).
\]
On the other hand, by Corollary~\ref{c:dist}, for any natural $j$, there exist $C_{j2}>0$ and
$p_{j2}>0$ such that, for any $p>p_{j2}$,
\[
{\rm dist}(p\tau_0+\mu_j, \sigma(\Delta^{L^p}))\leq C_{j2}p^{-1/2}.
\]
Without loss of generality, we can assume that, for any $p>\max(p_{j1}, p_{j2})$,
\[
(p\tau_0+\mu_j-C_{j2}p^{-1/2}, p\tau_0+\mu_j+C_{j2}p^{-1/2})\cap I_\ell=\emptyset, \forall \ell\neq j.
\]
Hence, for any $p>\max(p_{j1}, p_{j2})$, $\lambda_j(\Delta^{L^p})$ is the point of ${\rm Spec}(\Delta^{L^p})$, closest to $p\tau_0+\mu_j$. It follows that
\[
|\lambda_j(\Delta^{L^p})-(p\tau_0+\mu_j)|\leq C_{j2}p^{-1/2}, \quad
p>\max(p_{j1}, p_{j2})\,,
\]
that proves \eqref{e:l1}. 

The expansion \eqref{e:l1s} is proved similarly. 

In the end of this section, we recall the formula for $F_{1,2}$ obtained in \cite[Subsection 2.1]{ma-ma08}. Put
\[
\frac{\partial}{\partial z_j}=\frac{1}{2}\left(\frac{\partial}{\partial Z_{2j-1}}-i\frac{\partial}{\partial Z_{2j}}\right), \quad \frac{\partial}{\partial \overline{z}_j}=\frac{1}{2}\left(\frac{\partial}{\partial Z_{2j-1}}+i\frac{\partial}{\partial Z_{2j}}\right).
\]
Define first order differential operators $b_j,b^{+}_j, j=1,\ldots,n,$ on $T_{x_0}X$ by
\[
b_j= -2\nabla_{\tfrac{\partial}{\partial z_j}}-R^L_{x_0}(\mathcal R, \tfrac{\partial}{\partial z_j})=-2{\tfrac{\partial}{\partial z
_j}}+\frac{1}{2}a_j\overline{z}_j,
\]
\[
b^{+}_j= 2\nabla_{\tfrac{\partial}{\partial \overline{z}_j}} + R^L_{x_0}(\mathcal R, \tfrac{\partial}{\partial \overline{z}_j})=2{\tfrac{\partial}{\partial\overline{z}_j}}+\frac{1}{2}a_j z_j.
\]
So we can write
\[
\cL_{x_0}=\sum_{j=1}^n b_j b^{+}_j,\quad \tau(x_0)= \sum_{j=1}^n a_j. 
\]
Then we have (see (2.12) in \cite{ma-ma08})
\[
F_{1,2,x_0}(Z,Z^\prime) = [\cP_{x_0}\mathcal F_{1,2,x_0}\cP_{x_0}] (Z,Z^\prime),
\]
where $\mathcal F_{1,2,x_0}$ is an unbounded linear operator in $L^2(T_{x_0}X)$ given by 
\begin{multline*} 
\mathcal F_{1,2,x_0}
= 4 \left \langle R^{TX}_{x_0} \left(\frac{\partial}{\partial z_j}, \frac{\partial}{\partial z_k}\right) \frac{\partial}{\partial \overline{z}_j}, 
\frac{\partial}{\partial \overline{z}_k}\right \rangle\\
+ \left \langle (\nabla ^{X} \nabla ^{X}\mathcal J)_{(\mathcal R,\mathcal R)} \frac{\partial}{\partial z_j},
 \frac{\partial}{\partial \overline{z}_j} \right \rangle 
+\frac{i}{4} 
\tr_{|TX} \Big(\nabla ^{X} \nabla ^{X}(J\mathcal J)\Big)_{(\mathcal R, \mathcal R)} \\
+ \frac{1}{9} |(\nabla_{\mathcal R}^X \mathcal J) \mathcal R |^2
+ \frac{4}{9} \sum_{j=1}^n\left\langle(\nabla ^{X}_{\mathcal R} \mathcal J) \mathcal R, \frac{\partial} {\partial z_j}\right\rangle b^+_j \cL_{x_0}^{-1}b_j 
\left\langle(\nabla ^{X}_{\mathcal R}\mathcal J)\mathcal R,\frac{\partial}
{\partial\overline{z}_j}\right\rangle.
\end{multline*}

A more explicit computation of $J_{1,2}$ will be given elsewhere.

\end{document}